\begin{document}
\leftrunninghead{Samuel A. Alexander}
\rightrunninghead{Global Necessitation}

\title[Global Necessitation]{The Rule of Global Necessitation}

\author[S. A. Alexander]{SAMUEL A. ALEXANDER}
\affil{The Ohio State University\footnote{Received October 2012}}

\maketitle

\begin{abstract}
For half a century, authors have
weakened the rule of necessitation in various more or less ad hoc ways
in order to make inconsistent systems consistent.
More recently, necessitation was weakened in a systematic
way, not for the purpose of resolving paradoxes
but rather to salvage the deduction theorem for modal logic.
We show how this systematic weakening can be applied
to the older problem of paradox resolution.
Four examples are given:
a predicate symbol $S4$ consistent with arithmetic; a resolution of
the surprise examination paradox; a resolution of Fitch's paradox;
and finally, the construction of a knowing machine which knows its own code.
We discuss a technique for possibly finding answers to a question of
P.~\'{E}gr\'{e} and J.~van Benthem.
\end{abstract}

\section{Introduction}

To make a certain system consistent, \cite{myhill} suggested (pp.~469-470) weakening the rule of necessitation to only range over arithmetical
formulas.
Since then, others have suggested various other weakenings of necessitation to make other systems consistent.
\cite{halbach2008}
restricted necessitation to formulas not involving a particular predicate; \cite{egre} hinted (p.~44) at separating necessitation from
soundness.
Meanwhile, \cite{fitting} weakened necessitation in a different way, not to repair inconsistencies but instead to recover the deduction theorem
for modal logic (see \cite{hakli} for a survey on this issue).
Fitting's weakening has the advantage that it is very systematic.  We apply it to the older objective of fixing inconsistencies.

Suppose we have some axioms which are divided into \emph{global} and \emph{local} axioms.
The \emph{rule of global necessitation} for a modal operator $K$ is as follows:
\[
\frac{\phi}{K(\phi)}\mbox{ provided $\phi$ has been proved without appeal to global axioms.}
\]
The rule of global necessitation for a predicate symbol $K$ is the same, except that
$\phi/K(\phi)$ is replaced by $\phi/K(\ulcorner\phi\urcorner)$.
The idea is that a global axiom is one which is thoroughly trusted by the agent whose knowledge (or belief or...)
is represented by $K$, whereas a local axiom is true, but may not be known by the agent, or may be known with low
conviction.

Formally, if a system $S$ consists of a set of global axioms, a set of local axioms,
and the rule of global necessitation for $K$ (and modus ponens, which we hereafter implicitly include in every system),
we say $S\models\phi$ if 
there is a sequence $\phi_1,\ldots,\phi_n$ such that $\phi_n$ is $\phi$ and
such that for every $i$,
\begin{enumerate}
\item $\phi_i$ is a local axiom, or
\item $\phi_i$ is a global axiom, or
\item $\phi_i$ is logically valid, or
\item $\phi_i$ follows from two
earlier members of the sequence by modus ponens, or
\item $\phi_i$ is $K(\phi_j)$ (or $K(\ulcorner\phi_j\urcorner)$ if $K$ is a predicate symbol)
for some $j<i$ such that for every $1\leq k\leq j$,
$\phi_k$ is an instance of one of items (2)-(5) of this list.
\end{enumerate}

\clearpage
For example, the system $GLS$ of provability logic (\cite{boolos} p.~65) is equivalent to a system with global axioms the axioms of $GL$,
local axiom schema $T$, and the rule of global necessitation.

The idea for this weakened necessitation appeared in \cite{fitting} (p.~94)
and
more recently it was used by \cite{hakli}.  In both cases it was simply called the rule of necessitation;
we have called it the rule of global necessitation so as to distinguish it from its stronger ancestor.

The following lemma (compare the suggestion of \cite{smorynski} (p.~454),
as described by \cite{hakli} (p.~854)) is very useful for showing consistency of systems involving the rule of global necessitation.
To state it succinctly, we adopt the following notation: if $K$ is a modal operator, $K[\phi]$ will denote
$K(\phi)$, and if $K$ is a predicate symbol, $K[\phi]$ will denote $K(\ulcorner\phi\urcorner)$.

\begin{lem}
\label{standardizer}
Assume a logic where the compactness theorem holds.
Suppose $S$ is a system consisting of a set $S_g$ of global axioms, a set $S_{\ell}$ of local axioms,
and the rule of global necessitation for $K$.
Let $S'$ be the following system of axioms:
\begin{enumerate}
\item $\phi$, if $\phi\in S_g$.
\item $K[\phi]$ if $\phi$ is logically valid.
\item $K[\phi\rightarrow\psi]\rightarrow K[\phi]\rightarrow K[\psi]$.
\item $K[\phi]$ if $\phi$ is an instance of (1)-(3) or (recursively) (4).
\item $\phi$, if $\phi\in S_{\ell}$.
\end{enumerate}
For any $\phi$, if $S\models\phi$, then $S'\models\phi$.
\end{lem}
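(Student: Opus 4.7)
\medskip

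\noindent\textbf{Proof proposal.} The plan is a two-step induction. The first step is an auxiliary ``internalization'' claim about $S'$ alone; the second step then reduces $S$-derivations to $S'$-derivations case by case, with the critical case being necessitation (item (5) of the $S$-scheme).

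\emph{Step 1 (auxiliary claim).} I will show: if $\phi$ is derivable in $S'$ using only axioms of types (1)--(4) (that is, no local axiom from (5)), then $K[\phi]$ is likewise derivable in $S'$ using only axioms of types (1)--(4). The proof is by induction on the length of an $S'$-derivation of $\phi$. If $\phi$ is itself an instance of schemas (1), (2), or (3), or already has the form required by (4), then $K[\phi]$ is an instance of schema (4), invoking its recursive clause. If $\phi$ is obtained by modus ponens from $\psi$ and $\psi\to\phi$, the inductive hypothesis supplies $S'$-derivations of $K[\psi]$ and $K[\psi\to\phi]$ from (1)--(4); the distribution axiom $K[\psi\to\phi]\to K[\psi]\to K[\phi]$ (an instance of (3)) followed by two modus ponens steps yields $K[\phi]$. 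Pure logical inferences inside the $S'$-proof are handled by the underlying logic, which is complete because compactness has been assumed.

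\emph{Step 2 (main induction).} Given an $S$-derivation $\phi_1,\ldots,\phi_n=\phi$, I will prove by induction on $i$ that $S'\models\phi_i$, strengthening the claim to: if the initial segment $\phi_1,\ldots,\phi_i$ uses only items (2)--(5) of the $S$-scheme (so no local axiom has appeared), then $\phi_i$ is $S'$-derivable using only axioms of types (1)--(4). Most cases are immediate translations: a local axiom is $S'$-axiom (5); a global axiom is $S'$-axiom (1); a logically valid $\phi_i$ is derivable in $S'$ by pure logic; and a modus ponens step in $S$ is a modus ponens step in $S'$. The critical case is item (5): $\phi_i=K[\phi_j]$ where $\phi_1,\ldots,\phi_j$ uses only items (2)--(5). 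By the strengthened inductive hypothesis, $\phi_j$ is $S'$-derivable using only axioms (1)--(4). The auxiliary claim of Step~1 then produces an $S'$-derivation of $K[\phi_j]=\phi_i$ (again using only (1)--(4), which preserves the strengthening).

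The main obstacle, and the reason the lemma is not just a trivial unwinding, is choosing the right inductive statement: one has to carry along the extra invariant ``no local axiom has been used so far,'' because only such pure sub-derivations can be internalized under $K$. This is precisely the bookkeeping that the recursive clause of $S'$-schema (4) was designed to mirror; once the invariant is in place, each case reduces to a routine verification.
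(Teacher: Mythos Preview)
Your proof is correct and follows the same two-stage architecture as the paper: first establish an internalization lemma for the purely global fragment (your Step~1, the paper's ``Claim''), then run the main induction using it in the necessitation case. The one genuine difference is how the internalization lemma is proved. The paper invokes compactness: from $S'_0\models\phi_0$ it extracts finitely many axioms $s_1,\ldots,s_n\in S'_0$ with $s_1\to\cdots\to s_n\to\phi_0$ valid, applies axiom~(2) to get $K[s_1\to\cdots\to s_n\to\phi_0]$, then peels off the antecedents using~(3) and~(4). You instead do a direct induction on the $S'_0$-derivation, handling modus ponens with a single instance of~(3). Your route is slightly more elementary and in fact does not need the compactness hypothesis at all; the paper's route is a one-shot application rather than a second induction.

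Two small blemishes. First, your sentence ``complete because compactness has been assumed'' is wrong---compactness does not imply completeness---and in any case is not needed: in the paper's proof calculus a logically valid $\phi$ is itself a permitted line, and in your Step~1 the corresponding case is simply that $K[\phi]$ is then an instance of axiom~(2). You should state that case explicitly rather than gesture at ``pure logical inferences.'' Second, the paper factors the argument through the auxiliary system $S_0$ (global axioms plus the necessitation rule), whereas you carry the ``no local axiom yet'' invariant directly through the main induction; both work, but the paper's decomposition makes the role of the global fragment a bit more visible.
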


\begin{proof}
Let $S_0$ be the system consisting of axioms $S_g$ and the rule of global necessitation.  Let $S'_0$ be the set of axioms
from lines (1)--(4) of $S'$.  We claim:  For any $\phi$, if $S_0\models\phi$ then $S'_0\models\phi$.
This is proved by induction on proof length from $S_0$.
The only interesting case is when $\phi$ is $K[\phi_0]$, where $S_0\models\phi_0$ in fewer steps.
By induction, $S'_0\models\phi_0$.  By compactness, there are $s_1,\ldots,s_n\in S'_0$
such that
\[
s_1\rightarrow \cdots \rightarrow s_n \rightarrow\phi_0
\]
is valid.  By (2),
\[
S'_0\models K[ s_1\rightarrow \cdots \rightarrow s_n \rightarrow\phi_0 ].\]
By repeated applications of (3),
\[
S'_0\models K[s_1] \rightarrow \cdots \rightarrow K[s_n] \rightarrow K[\phi_0].\]
By (4), for each $i$, $K[s_i]\in S'_0$, since each $s_i\in S'_0$ and $S'_0$ is closed under $K$.
Thus $S'_0\models K[\phi_0]$, as desired.

Now we attack the main lemma itself, again by induction on proof length.
Suppose $S\models\phi$.
There is only one nontrivial case: $\phi$ is $K[\phi_0]$
where $S\models\phi_0$ in fewer steps and $\phi$ is obtained from that shorter proof
by the rule of global necessitation.
Since $\phi$ is so obtained, this means every step in the proof of $\phi_0$ is
an instance of lines (2)-(5) of the definition of proof on the previous page.
This implies $S_0\models\phi_0$, and thus $S_0\models K[\phi_0]$,
and so by the claim, $S'_0\models K[\phi_0]$, so certainly $S'\models K[\phi_0]$.
\end{proof}

The bulk of the paper will concern modal operator paradoxes.  The reason for this is that,
due to self-reference, systems involving full necessitation for a predicate symbol blow up
extremely easily\footnote{In \cite{montague} (p.~294)
(also quoted in \cite{egre}) we find: ``...if necessity is to be treated 
syntactically ... then virtually all of modal logic, even the weak 
system $S1$, is to be sacrificed.''}, and thus the well-known 
paradoxes (see \cite{egre}) tend to be low-level, by which I mean they usually derive contradiction
from some
set of assumptions which is (equivalent to) a tiny fragment of a predicate-symbol $S4$\footnote{One exception
is the predicate symbol treatment of the surprise examination paradox, but that paradox has an equally good
modal operator treatment anyway.  Another exception appears in \cite{horsten}, more on that in our Conclusion below.}.
Thus, the following theorem more or less resolves them all in one fell swoop (and might, therefore,
be a tentative step toward resolving tensions described in sections 1 \& 2 of \cite{halbach}).

\begin{thm}
\label{predicates4}
(A predicate symbol version of weakened $S4$, consistent with arithmetic)
The following system is consistent (in the language of Peano Arithmetic extended by a predicate
symbol $K$):
\begin{enumerate}
\item (Global) The axioms of Peano Arithmetic.
\item (Global) $K(\ulcorner\phi\rightarrow\psi\urcorner)\rightarrow K(\ulcorner\phi\urcorner)\rightarrow K(\ulcorner\psi\urcorner)$.
\item (Global) $K(\ulcorner\phi\urcorner)\rightarrow K(\ulcorner K(\ulcorner\phi\urcorner) \urcorner)$.
\item (Local) $K(\ulcorner\phi\urcorner)\rightarrow\phi$.
\item The rule of global necessitation.
\end{enumerate}
\end{thm}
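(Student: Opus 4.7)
The plan is to invoke Lemma \ref{standardizer} to reduce to the consistency of the ``standardized'' system $S'$, and then exhibit an explicit model of $S'$ over the standard model of arithmetic. Any derivation of $\bot$ in $S$ would yield one in $S'$, so producing an $M$ with $M\models S'$ suffices.

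Write $S'_0$ for the global portion of $S'$, namely lines (1)--(4); this is $S'$ with factivity removed. Set $T = \{\phi : S'_0 \vdash \phi\}$. The key structural property of $T$ is that $\phi \in T$ implies $K(\ulcorner\phi\urcorner) \in T$. This follows essentially from the proof of Lemma \ref{standardizer}: let $S_0$ be the system of $S$-global axioms with global necessitation and no local axioms. Since $S_0$ has no local axioms, every $S_0$-theorem qualifies for the rule, so $S_0\vdash\phi$ gives $S_0\vdash K(\ulcorner\phi\urcorner)$; the internal claim of that lemma's proof then pushes the conclusion back into $S'_0$, giving $S'_0\vdash K(\ulcorner\phi\urcorner)$.

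Now define $M=(\mathbb{N},K^M)$ with $K^M := \{\ulcorner\phi\urcorner : \phi\in T\}$, and verify each axiom family of $S$ is true in $M$. The PA axioms hold since the underlying structure is $\mathbb{N}$. Distribution $K(\ulcorner\phi\to\psi\urcorner)\to K(\ulcorner\phi\urcorner)\to K(\ulcorner\psi\urcorner)$ holds because $T$ is closed under modus ponens. Positive introspection holds precisely by the closure property just established. For factivity $K(\ulcorner\phi\urcorner)\to\phi$ I would prove, by induction on $S'_0$-derivation length, that every member of $T$ is true in $M$: the axioms from lines (1)--(3) have just been checked; a line (4) axiom $K(\ulcorner\psi\urcorner)$ is true in $M$ by the very definition of $K^M$, since $\psi$ is an earlier $S'_0$-axiom and hence in $T$; and modus ponens preserves truth.

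The only point requiring care is the apparent circularity: $K^M$ is defined from $T$, yet $T$'s soundness is checked against $K^M$. The resolution is that the Tarskian truth of each sentence in $T$ is determined once $K^M$ is fixed as a subset of $\mathbb{N}$, and the inductive verification walks up the stratified $S'_0$-axiomatisation (arithmetic and $K$-schemas at the base, then iterated $K$-embeddings on top) without ever requiring a fixed point. I expect this soundness induction to be the main (though routine) step; everything else is bookkeeping around Lemma \ref{standardizer}.
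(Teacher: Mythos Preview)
Your proposal is correct and follows essentially the same route as the paper: reduce via Lemma~\ref{standardizer} to the rule-free system $S'$, build a model over $\mathbb{N}$ by interpreting $K$ as $S'_0$-provability, establish that $S'_0$ is closed under $K(\ulcorner\cdot\urcorner)$, and then verify the axioms with factivity handled last via the soundness of $S'_0$ in the model. One small wobble worth tightening: your closure argument detours through $S_0$ and tacitly assumes $S'_0\vdash\phi\Rightarrow S_0\vdash\phi$, which is the \emph{converse} of the internal claim in Lemma~\ref{standardizer}; this converse is true (every $S'_0$-axiom is an $S_0$-theorem) but you should say so, or else simply rerun the compactness argument directly on $S'_0$ as the paper does.
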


(We'll discuss the philosophical plausibility of localizing soundness like this in the Conclusion.)

\begin{proof}
Let $S'$ be the following system:
\begin{enumerate}
\item The axioms of Peano Arithmetic.
\item $K(\ulcorner\phi\rightarrow\psi\urcorner)\rightarrow K(\ulcorner\phi\urcorner)\rightarrow K(\ulcorner\psi\urcorner)$.
\item $K(\ulcorner\phi\urcorner)\rightarrow K(\ulcorner K(\ulcorner\phi\urcorner) \urcorner)$.
\item $K(\ulcorner\phi\urcorner)$ whenever $\phi$ is valid.
\item $K(\ulcorner\phi\urcorner)$ whenever $\phi$ is an instance of (1)--(4) or (recursively) (5).
\item $K(\ulcorner\phi\urcorner)\rightarrow\phi$.
\end{enumerate}
By Lemma~\ref{standardizer}, we need only show $S'$ is consistent.

In the absense of non-modus ponens rules of inference, consistency is easy to prove: merely construct a model.
Let $S'_0$ consist of the axioms in lines (1)-(5) of $S'$.
Let $\mathscr{N}$ be the model which has universe $\mathbb{N}$, which interprets symbols of PA in the intended ways,
and which interprets $K$ as follows:
\[
\mbox{$\mathscr{N}\models K(\ulcorner\phi\urcorner)$ iff $S'_0\models\phi$}
\]
(we do not care how $\mathscr{N}$ interprets $K(\overline{n})$ if $n$ is not the G\"{o}del number of a formula).
We will show that $\mathscr{N}\models S'$, proving the theorem.

Preliminary Claim: whenever $S'_0\models\phi$, $S'_0\models K(\ulcorner\phi\urcorner)$.
To see this, suppose $S'_0\models\phi$.  By compactness, there are $s_1,\ldots,s_n\in S'_0$
such that
\[s_1\rightarrow\cdots\rightarrow s_n\rightarrow\phi\]
is valid.  It follows $S'_0\models K(\ulcorner\phi\urcorner)$ by an argument similar to
the proof of Lemma~\ref{standardizer}.

Armed with the claim, we show $\mathscr{N}\models S'$.  Suppose $\sigma\in S'$, we will show $\mathscr{N}\models \sigma$.

Case 1: $\sigma$ is an axiom of Peano Arithmetic.  Then $\mathscr{N}\models\sigma$ because $\mathscr{N}$ has universe $\mathbb{N}$
and interprets symbols of PA in their intended ways.

Case 2: $\sigma$ is $K(\ulcorner\phi\rightarrow\psi\urcorner)\rightarrow K(\ulcorner\phi\urcorner)\rightarrow K(\ulcorner\psi\urcorner)$.
Assume $\mathscr{N}\models K(\ulcorner\phi\rightarrow\psi\urcorner)$
and $\mathscr{N}\models K(\ulcorner\phi\urcorner)$.  By definition this means $S'_0\models \phi\rightarrow\psi$
and $S'_0\models \phi$.  By modus ponens, $S'_0\models \psi$, so $\mathscr{N}\models  K(\ulcorner\psi\urcorner)$, as desired.

Case 3: $\sigma$ is $K(\ulcorner\phi\urcorner)\rightarrow K(\ulcorner K(\ulcorner\phi\urcorner) \urcorner)$.
Assume $\mathscr{N}\models K(\ulcorner\phi\urcorner)$.
This means $S'_0\models \phi$.  By the preliminary claim, $S'_0\models K(\ulcorner\phi\urcorner)$,
which shows $\mathscr{N}\models K(\ulcorner K(\ulcorner\phi\urcorner) \urcorner)$.

Case 4: $\sigma$ is $K(\ulcorner\phi\urcorner)$ where $\phi$ is valid.
Since $\phi$ is valid, certainly $S'_0\models \phi$, so $\mathscr{N}\models K(\ulcorner\phi\urcorner)$.

Case 5: $\sigma$ is $K(\ulcorner\phi\urcorner)$ where $\phi$ is an instance of (1)-(5) of $S'$.
Then $\phi\in S'_0$, so $S'_0\models \phi$, so $\mathscr{N}\models K(\ulcorner\phi\urcorner)$.

Case 6: $\sigma$ is $K(\ulcorner\phi\urcorner)\rightarrow\phi$.
Assume $\mathscr{N}\models K(\ulcorner\phi\urcorner)$, which means $S'_0\models\phi$.
Note that, by Cases 1--5, $\mathscr{N}\models S'_0$.
Since $\mathscr{N}\models S'_0$ and $S'_0\models\phi$,
$\mathscr{N}\models \phi$, completing the proof.
\end{proof}

Although we expressed Theorem~\ref{predicates4} as a result about knowledge,
it could just as well be couched as a result about truth predicates
(thickening the plot of \cite{friedman}).

\section{Myhill's Necessitation and a Moore's Paradox}

\cite{myhill} made a certain system consistent by forcing necessitation's premises
to be arithmetical.  This differs from our approach:
Myhill's weak necessitation discriminates based on the \emph{form} of the premise, whereas ours 
discriminates on the \emph{origin} of the premise.
In this section, we exhibit a paradox
thwarting Myhill's method but repairable with global necessitation.

Let $\Psi$ be a sentence (in the language of Peano arithmetic) which
is true about $\mathbb{N}$ but independent of PA.  For example $\Psi$ 
could be $Con(PA)$ or $\Psi$ could be Goodstein's Theorem (see 
\cite{kirby}).  
Thus the Peano arithmetist does not know $\Psi$, although $\Psi$ is true.

We might, therefore, attempt to study arithmetists' knowledge using a system
consisting of the axioms of PA, $\Psi$, $\neg K(\ulcorner\Psi\urcorner)$, and the rule of
necessitation.  Of course this would be very flawed (and provably inconsistent
with a very short proof): the rule of necessitation
should not be allowed to interact with $\Psi$ because the arithmetist does not know $\Psi$!
None of this is surprising, the only reason we bring it up is to point out that
Myhill's weakening of necessitation, restricting it to purely arithmetical premises,
has no effect here.  On the other hand, the rule of global necessitation can
be used to articulate the system properly.

\begin{thm}
\label{moore1}
The following system is consistent ($K$ a predicate symbol):
\begin{enumerate}
\item (Global) The axioms of Peano Arithmetic.
\item (Local) $\Psi$.
\item (Local) $\neg K(\ulcorner\Psi\urcorner)$.
\item The rule of global necessitation.
\end{enumerate}
\end{thm}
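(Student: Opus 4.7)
The plan is to follow the template of the proof of Theorem~\ref{predicates4}. First, apply Lemma~\ref{standardizer} to reduce the theorem to consistency of the standardized system $S'$ whose axioms are: the axioms of PA; $K(\ulcorner\phi\to\psi\urcorner)\to K(\ulcorner\phi\urcorner)\to K(\ulcorner\psi\urcorner)$; $K(\ulcorner\phi\urcorner)$ for every valid $\phi$; $K(\ulcorner\phi\urcorner)$ whenever $\phi$ is an instance of one of the preceding schemes; together with the two local axioms $\Psi$ and $\neg K(\ulcorner\Psi\urcorner)$.

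Next, let $S'_0$ consist of lines (1)--(4) of $S'$, i.e., everything but the local axioms, and mimic the model construction from Theorem~\ref{predicates4}: take $\mathscr{N}$ with universe $\mathbb{N}$, the standard interpretation of the arithmetic symbols, and $\mathscr{N}\models K(\ulcorner\phi\urcorner)$ iff $S'_0\models\phi$. The preliminary claim and Cases~1--5 of the proof of Theorem~\ref{predicates4} apply verbatim and show $\mathscr{N}\models S'_0$. The first local axiom $\Psi$ holds in $\mathscr{N}$ because $\Psi$ was chosen to be true about $\mathbb{N}$.

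The only nontrivial task is to verify $\mathscr{N}\models\neg K(\ulcorner\Psi\urcorner)$, which by the definition of $\mathscr{N}$ reduces to showing $S'_0\not\models\Psi$. I would establish this by exhibiting a model of $S'_0\cup\{\neg\Psi\}$. Since $\Psi$ is independent of PA, fix any $\mathscr{M}\models \text{PA}+\neg\Psi$ and expand it to the extended language by declaring that $K(n)$ holds of every element $n$. Then the distribution axioms, the axioms of the form $K(\ulcorner\phi\urcorner)$ for valid $\phi$, and the closure axioms from line (4) of $S'$ all hold trivially, because each such axiom is either itself a $K(\ulcorner\cdot\urcorner)$-statement or has a $K(\ulcorner\cdot\urcorner)$-statement as its ultimate consequent. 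Hence $\mathscr{M}\models S'_0$ yet $\mathscr{M}\not\models\Psi$, so by soundness $S'_0\not\models\Psi$, as required.

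The central obstacle is precisely this last step: ensuring that adjoining the full $K$-closure machinery of $S'_0$ to PA does not accidentally force $\Psi$ to become provable. The ``$K$ everywhere true'' interpretation resolves this cleanly, since none of the global axioms of $S'_0$ place any constraint on the content of knowledge beyond closure and distribution, both of which are vacuous once $K$ is universally satisfied. This is also what separates the present theorem from Myhill's setting: it is essential that $\Psi$ be treated as an untrusted (local) premise, so that $K$ never gets a chance to swallow it during a proof.
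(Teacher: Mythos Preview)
Your proposal is correct and follows essentially the same route as the paper: reduce via Lemma~\ref{standardizer}, interpret $K$ in $\mathscr{N}$ by provability from $S'_0$, and establish $S'_0\not\models\Psi$ by expanding a nonstandard model of $\mathrm{PA}+\neg\Psi$ with the ``knows everything'' interpretation of $K$. The only minor imprecision is your appeal to the Preliminary Claim of Theorem~\ref{predicates4}, which is unnecessary here since the present system has no introspection axiom; otherwise the argument matches the paper's exactly.
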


Though the theorem may rightly be considered silly, the proof introduces a useful trick.

\begin{proof}
By Lemma~\ref{standardizer}, it suffices to show the following system $S'$ is consistent:
\begin{enumerate}
\item The axioms of Peano Arithmetic.
\item $K(\ulcorner\phi\urcorner)$ whenever $\phi$ is valid.
\item $K(\ulcorner\phi\rightarrow\psi\urcorner)\rightarrow K(\ulcorner\phi\urcorner)\rightarrow K(\ulcorner\psi\urcorner)$.
\item $K(\ulcorner\phi\urcorner)$ whenever $\phi$ is an instance of (1)-(4).
\item $\Psi$.
\item $\neg K(\ulcorner\Psi\urcorner)$.
\end{enumerate}
Let $S'_0$ be the set of axioms in lines (1)-(4) of $S'$.  Let $\mathscr{N}$ be a model with universe $\mathbb{N}$,
interpreting symbols of PA in the intended ways, and interpreting knowledge so that
\[
\mbox{$\mathscr{N}\models K(\ulcorner\phi\urcorner)$ iff $S'_0\models\phi$.}
\]
We will show $\mathscr{N}\models S'$, proving the theorem.  The tricky part is to show $\mathscr{N}\models \neg K(\ulcorner\Psi\urcorner)$.
For that we need:

Preliminary Claim: $S'_0\not\models\Psi$.

In the absense of non-modus ponens rules of inference, to show a theory does not prove a formula, it suffices
to build a model of the theory where the formula is untrue.
Here, it suffices to build a model of $S'_0$ where $\Psi$ is untrue.

Since $\Psi$ is independent of PA, there is a (nonstandard) model $\mathscr{M}$, in the language of PA,
such that $\mathscr{M}\models PA$ and $\mathscr{M}\models \neg\Psi$.
We will extend $\mathscr{M}$ to a model $\mathscr{M}'$ in the language of PA plus $K$.
To do so we must specify how $\mathscr{M}'$ is to interpret $K$.  Let $\mathscr{M}'$ interpret $K$ so that
\[
\mbox{$\mathscr{M}'\models K(\ulcorner\phi\urcorner)$ for every $\phi$}
\]
(loosely: $\mathscr{M}'$ ``knows everything'').
I claim $\mathscr{M}'\models S'_0$.  To see this, let $\tau\in S'_0$, we will show $\mathscr{M}'\models\tau$.

Case 1: $\tau$ is an axiom of PA.  Then $\mathscr{M}'\models\tau$ since $\mathscr{M}\models\tau$ and $\mathscr{M}'$ agrees
with $\mathscr{M}$ on the language of PA.

Case 2: $\tau$ is $K(\ulcorner\phi\urcorner)$ where $\phi$ is valid.
Then $\mathscr{M}'\models K(\ulcorner\phi\urcorner)$ since $\mathscr{M}'$ knows everything.

Case 3: $\tau$ is $K(\ulcorner\phi\rightarrow\psi\urcorner)\rightarrow K(\ulcorner\phi\urcorner)\rightarrow K(\ulcorner\psi\urcorner)$.
We have $\mathscr{M}'\models K(\ulcorner\psi\urcorner)$ since $\mathscr{M}'$ knows  everything.

Case 4: $\tau$ is $K(\ulcorner\phi\urcorner)$ where $\phi$ is an instance of (1)-(4).
Then $\mathscr{M}'\models K(\ulcorner\phi\urcorner)$ since $\mathscr{M}'$ knows  everything.

Thus $\mathscr{M}'\models S'_0$.  And $\mathscr{M}'\not\models \Psi$ since $\mathscr{M}\not\models\Psi$ and
$\mathscr{M}'$ agrees with $\mathscr{M}$ on the language of PA.
This proves the preliminary claim (in fact by arbitrariness of $\Psi$ it proves
$S'_0$ is a conservative extension of PA).

For the theorem itself, let $\sigma\in S'$, I claim $\mathscr{N}\models\sigma$.

Case 1: $\sigma$ is an axiom of Peano Arithmetic.
Then $\mathscr{N}\models\sigma$ since $\mathscr{N}$ extends $\mathbb{N}$.

Case 2: $\sigma$ is $K(\ulcorner\phi\urcorner)$ where $\phi$ is valid.
Since $\phi$ is valid, $S'_0\models\phi$, thus $\mathscr{N}\models K(\ulcorner\phi\urcorner)$.

Case 3: $\sigma$ is $K(\ulcorner\phi\rightarrow\psi\urcorner)\rightarrow K(\ulcorner\phi\urcorner)\rightarrow K(\ulcorner\psi\urcorner)$.
Assume $\mathscr{N}\models K(\ulcorner\phi\rightarrow\psi\urcorner)$
and $\mathscr{N}\models K(\ulcorner\phi\urcorner)$.  That means $S'_0\models\phi\rightarrow\psi$ and $S'_0\models \phi$.
By modus ponens, $S'_0\models\psi$, so $\mathscr{N}\models K(\ulcorner\psi\urcorner)$.

Case 4: $\sigma$ is $K(\ulcorner\phi\urcorner)$ where $\phi$ is an instance of (1)-(4).  Then $\phi\in S'_0$, hence $S'_0\models\phi$
and $\mathscr{N}\models K(\ulcorner\phi\urcorner)$.

Case 5: $\sigma$ is $\Psi$.  We have $\mathscr{N}\models\Psi$ since $\mathscr{N}$ extends $\mathbb{N}$.

Case 6: $\sigma$ is $\neg K(\ulcorner\Psi\urcorner)$.  By the Preliminary Claim, $S'_0\not\models\Psi$.
Thus $\mathscr{N}\not\models K(\ulcorner\Psi\urcorner)$, as desired, proving the theorem.
\end{proof}

Whenever we establish a consistency result by localizing certain schemas, a natural question is what is the
minimum set of schemas we can localize and obtain consistency.  
It is clear that we cannot strengthen Theorem~\ref{moore1}
by globalizing $\Psi$.  And by L\"{o}b's Theorem it follows
we cannot globalize $\neg K(\ulcorner\Psi\urcorner)$ either,
so in some sense Theorem~\ref{moore1} is sharp.
Remarkably, though, the same system, formulated using a modal operator 
instead of a predicate symbol, can be further sharpened.

\begin{thm}
\label{moore2}
The following system is consistent ($K$ a modal operator):
\begin{enumerate}
\item (Global) The axioms of Peano Arithmetic.
\item (Local) $\Psi$.
\item (Global) $\neg K(\Psi)$.
\item The rule of global necessitation.
\end{enumerate}
\end{thm}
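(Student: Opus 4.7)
The plan is to mirror the proof of Theorem~\ref{moore1}: apply Lemma~\ref{standardizer} to reduce to consistency of a rule-free system $S'$ (with both the PA axioms and $\neg K(\Psi)$ entering line (1) and, via closure, line (4)), and then construct a model. Let $S'_0$ consist of lines (1)--(4) of $S'$, and let $\mathscr{N}$ have universe $\mathbb{N}$, the standard interpretation of PA, and $\mathscr{N}\models K(\phi)$ iff $S'_0\models\phi$. Every case of $\mathscr{N}\models S'$ will proceed as in Theorem~\ref{moore1}, \emph{except} the clause for $\neg K(\Psi)$, which demands the preliminary claim that $S'_0\not\models\Psi$.

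The main obstacle is establishing this preliminary claim. In Theorem~\ref{moore1}, $\neg K(\Psi)$ was local and so absent from $S'_0$, which is why the falsifying auxiliary model $\mathscr{M}'$ could simply ``know everything''. Here $\neg K(\Psi)\in S'_0$, so any model refuting $\Psi$ must also refuse to know $\Psi$: omniscience is no longer an option. My fix is to take $\mathscr{M}\models PA+\neg\Psi$ (available because $\Psi$ is PA-independent) and extend it to $\mathscr{M}'$ via the \emph{reflexive} interpretation $\mathscr{M}'\models K(\phi)$ iff $\mathscr{M}'\models\phi$, well-defined by induction on $\phi$ (equivalently, Kripke semantics on a single reflexive world). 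Under this interpretation K-distribution collapses to a propositional tautology, $K$ applied to a validity is immediate, and $K(\phi)$ holds whenever $\phi$ does, so verifying $\mathscr{M}'\models S'_0$ reduces to checking that every member of $S'_0$ is true in $\mathscr{M}'$. The PA axioms are inherited from $\mathscr{M}$; the axiom $\neg K(\Psi)$ reduces under the reflexive interpretation to $\neg\Psi$, which holds by construction; and the nested-$K$ clause (4) is handled by induction on its recursive definition. Hence $\mathscr{M}'\models S'_0$ and $\mathscr{M}'\not\models\Psi$, yielding the claim, after which the verification of $\mathscr{N}\models S'$ mimics Theorem~\ref{moore1}.

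It is worth noting why this sharpening over Theorem~\ref{moore1} is peculiar to the modal-operator setting: if $K$ were a predicate symbol, then stipulating $\mathscr{M}'\models K(\ulcorner\phi\urcorner)$ iff $\mathscr{M}'\models\phi$ would amount to equipping $\mathscr{M}'$ with its own internally definable truth predicate, which Tarski's undefinability theorem forbids. Thus the reflexive-interpretation trick hinges essentially on $K$ being modal, matching the author's observation that the modal formulation admits a further sharpening that the predicate formulation does not.
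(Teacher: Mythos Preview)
Your proposal is correct and matches the paper's proof essentially line for line: the same reduction via Lemma~\ref{standardizer}, the same model $\mathscr{N}$, the same preliminary claim, and the same reflexive interpretation $\mathscr{M}'\models K(\phi)\Leftrightarrow\mathscr{M}'\models\phi$ on a nonstandard model of $PA+\neg\Psi$. The paper packages your ``induction on the recursive definition'' for the nested-$K$ clause as a separate technical lemma (Lemma~\ref{techlemma}), and your closing remark about Tarski's theorem is exactly the observation the paper makes immediately after the proof.
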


To prove the theorem, we need a technical lemma which we'll use again 
later.

\begin{lem}
\label{techlemma}
Suppose $K$ is a modal operator, $S$ is a set of axioms, and $K(S)$
(the $K$-closure of $S$) is the set of axioms
\begin{enumerate}
\item $\phi$, if $\phi\in S$.
\item $K(\phi)$, if $\phi$ is an instance of (1) or (recursively) (2).
\end{enumerate}
Suppose $\mathscr{N}$ is a model in which $K$ is interpreted so that
$\mathscr{N}\models K(\phi)$ iff $\mathscr{N}\models\phi$.
If $\mathscr{N}\models S$ then $\mathscr{N}\models K(S)$.
\end{lem}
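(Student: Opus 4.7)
The plan is to prove $\mathscr{N}\models K(S)$ by structural induction on the recursive construction that generates $K(S)$. Concretely, each element of $K(S)$ arises by finitely many applications of clauses (1) and (2), so I would assign to each $\sigma\in K(S)$ a ``depth'' equal to the number of outer $K$-prefixes peeled off before reaching a formula produced by clause (1), and induct on this depth.

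For the base case (depth $0$), $\sigma\in S$ by clause (1), and $\mathscr{N}\models\sigma$ holds by the hypothesis $\mathscr{N}\models S$. For the inductive step, $\sigma$ has the form $K(\phi)$ where $\phi$ is itself a member of $K(S)$ at strictly smaller depth. By the inductive hypothesis, $\mathscr{N}\models\phi$. Now I invoke the defining property of $\mathscr{N}$'s interpretation of $K$ — namely that $\mathscr{N}\models K(\psi)$ iff $\mathscr{N}\models\psi$ for every $\psi$ — to conclude $\mathscr{N}\models K(\phi)$, i.e.\ $\mathscr{N}\models\sigma$.

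There is no real obstacle here: the lemma is essentially a syntactic observation matched against a semantic ``knows-everything-true'' interpretation of $K$. The only thing to be slightly careful about is that the recursion in clause (2) is well-founded, which is what licenses the induction on depth; but this is immediate from the inductive phrasing ``$\phi$ is an instance of (1) or (recursively) (2).'' No use of compactness, modus ponens, or any structural property of $S$ beyond $\mathscr{N}\models S$ is needed, which is why this lemma is lightweight enough to serve as a reusable black box in the Moore-paradox arguments to follow.
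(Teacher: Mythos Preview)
Your proof is correct and essentially the same as the paper's: both argue by induction, with the inductive step peeling off an outer $K$ and invoking the truth-interpretation of $K$. The only cosmetic difference is that the paper phrases the induction as one on formula complexity (casing on whether $\phi$ has the form $K(\phi_0)$, and within that case checking whether $\phi\in S$ directly), whereas you induct on derivation depth in $K(S)$; both measures are well-founded and the argument is otherwise identical.
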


\begin{proof}
Assume $\mathscr{N}\models S$.
We prove by induction on formula complexity that for all $\phi\in K(S)$,
$\mathscr{N}\models \phi$.

Case 1: $\phi$ is of the form $K(\phi_0)$ for some $\phi_0$.
If $\phi\in S$ we are done.  If
$\phi\not\in S$, then $\phi$ is an instance of line (2) of $K(S)$,
and thus $\phi_0\in K(S)$.  By induction, $\mathscr{N}\models \phi_0$,
and thus $\mathscr{N}\models K(\phi_0)$.

Case 2: $\phi$ is not of the form $K(\phi_0)$.  Then $\phi$
must be an instance of line (1) of $K(S)$, i.e., $\phi\in S$, and
$\mathscr{N}\models\phi$ by assumption.
\end{proof}

Like Theorem~\ref{moore1}, the proof of Theorem~\ref{moore2}
introduces a trick which we'll use again later.

\begin{proof}[Proof of Theorem~\ref{moore2}]
By Lemma~\ref{standardizer}, it suffices to show the following system $S'$ is consistent:
\begin{enumerate}
\item The axioms of Peano Arithmetic.
\item $\neg K(\Psi)$.
\item $K(\phi)$ whenever $\phi$ is valid.
\item $K(\phi\rightarrow\psi)\rightarrow K(\phi)\rightarrow K(\psi)$.
\item $K(\phi)$ whenever $\phi$ is an instance of (1)-(5).
\item $\Psi$.
\end{enumerate}
Let $S'_0$ consist of lines (1)-(5).  Let $\mathscr{N}$ be a model with universe $\mathbb{N}$, interpreting symbols of PA in
the intended ways, and interpreting $K$ so that
\[
\mbox{$\mathscr{N}\models K(\phi)$ iff $S'_0\models\phi$}.\]
We'll show $\mathscr{N}\models S'$, proving the theorem.
As before, the tricky part is showing $\mathscr{N}\models \neg K(\Psi)$.
For that we'll need the same preliminary claim as before, but now with a different proof.

Preliminary Claim:
$S'_0\not\models \Psi$.

To prove $S'_0\not\models\Psi$ it's enough to build a model of $S'_0$ where $\Psi$ fails.
Since $PA\not\models\Psi$, there is a (nonstandard) model $\mathscr{M}$ (in the language of PA) such that
$\mathscr{M}\models PA$ and $\mathscr{M}\not\models\Psi$.  Extend $\mathscr{M}$ to a model
$\mathscr{M}'$ in the language of PA plus $K$, by recursively letting
\[
\mbox{$\mathscr{M}'\models K(\phi)$ iff $\mathscr{M}'\models\phi$}
\]
for every formula $\phi$ (this would, of course, be impossible if $K$ were a predicate symbol).
I claim $\mathscr{M}'\models S'_0$.  To see this, let $\tau\in S'_0$, we'll show $\mathscr{M}'\models\tau$.

Case 1: $\tau$ is an axiom of PA.  Then $\mathscr{M}'\models \tau$ since $\mathscr{M}\models\tau$
and $\mathscr{M}'$ agrees with $\mathscr{M}$ on the language of PA.

Case 2: $\tau$ is $\neg K(\Psi)$.  By definition, $\mathscr{M}'\models K(\Psi)$ iff $\mathscr{M}'\models\Psi$.
And $\mathscr{M}'\not\models\Psi$ since $\mathscr{M}\not\models\Psi$ and they agree on the language of PA.
So $\mathscr{M}'\models\neg K(\Psi)$.

Case 3: $\tau$ is $K(\phi)$ where $\phi$ is valid.  Since $\phi$ is valid, $\mathscr{M}'\models\phi$,
thus $\mathscr{M}'\models K(\phi)$.

Case 4: $\tau$ is $K(\phi\rightarrow\psi)\rightarrow K(\phi)\rightarrow K(\psi)$.
Assume $\mathscr{M}'\models K(\phi\rightarrow\psi)$ and $\mathscr{M}'\models K(\phi)$.
This means $\mathscr{M}'\models \phi\rightarrow\psi$ and $\mathscr{M}'\models\phi$.
Thus $\mathscr{M}'\models\psi$, so $\mathscr{M}'\models K(\psi)$.

Case 5: $\tau$ is $K(\phi)$ where $\phi$ is an instance of (1)-(5).
Then $\mathscr{M}'\models\tau$ by
Lemma~\ref{techlemma}.

This establishes $\mathscr{M}'\models S'_0$.  And $\mathscr{M}'\not\models\Psi$, since $\mathscr{M}\not\models\Psi$
and the two agree on the language of PA, therefore $S'_0\not\models\Psi$ and the claim is proved.

Now back to the main theorem, for $\sigma\in S'$, we must show $\mathscr{N}\models\sigma$.
This splits the proof into the same six cases as Theorem~\ref{moore1}, and the six cases are proved
in exactly the same ways as in Theorem~\ref{moore1}, so we omit them.  The crucial difference
was the different proof of the Preliminary Claim.
\end{proof}

This has an interesting informal application to non-idealized epistemology.
As a mathematician, I would like to be able to state the following Moore's paradox:
\begin{itemize}
\item Peano Arithmetic (or ZFC) is true,
\item The Four-Color Theorem is true (I trust \cite{appel}),
\item I do not know that the Four-Color Theorem is true (I have utmost confidence I will not discover a proof of it in my lifetime),
\end{itemize}
and operate using the rule of global necessitation ($\neg K(4CT)$ being global, and $4CT$ being local).
Strictly speaking, this is inconsistent;
but 
we won't obtain contradiction through purely epistemological
means, as the system would be consistent if $4CT$ were independent.  I believe I can work in this system my whole life
and not state a contradiction.
See \cite{shapiroslides} p.~7 for a similar example.

The proofs of Theorems~\ref{moore1} \& \ref{moore2} highlight a difference between operators and predicates:
with operators, we can prove $\not\models$ by constructing toy models where the operator
is interpreted as truth, which is impossible with predicates.

\section{Example: The Surprise Examination Paradox}

The surprise examination paradox hardly needs an introduction (but the 
survey by \cite{chow} is worthy of pleasure-reading).
For a formalization,
we turn to
\cite{mclelland}, where we find the following system laid out on 
pp.~76--77 (the language is propositional, with atoms $p_1,\ldots,p_n$, 
where $n$ is the number of days in the week and $p_i$ is read as ``the 
examination takes place on the $i$th day'', further extended by a modal 
operator $K$, with $K(\phi)$ read as ``$\phi$ is known just prior to the 
exam''):
\begin{enumerate}
\item $K(\phi)\rightarrow\phi$.
\item $K(\phi\rightarrow\psi)\rightarrow K(\phi)\rightarrow K(\psi)$.
\item $T_n$, by which is meant $p_1\vee\cdots\vee p_n$ (``an exam will occur'').
\item $\bigwedge_{i=1}^n \neg K(p_i)$ (``it will be a surprise'').
\item $\bigwedge_{i=1}^n ((\neg T_i)\rightarrow K(\neg T_i))$,
where $T_i$ abbreviates $p_1\vee\cdots\vee p_i$ (``exam non-occurrences 
are observable'').
\item The rule of necessitation.
\end{enumerate}

The above system is inconsistent, due to the surprise 
examination paradox.  We would like to make it consistent by weakening 
necessitation.  Following the example in Section 1 in which we made $S4$
consistent for predicate symbols by localizing soundness, we might
attempt to resolve the surprise examination paradox in the same way.
But it is not difficult to see that the above system remains inconsistent
even with $K(\phi)\rightarrow\phi$ localized and necessitation replaced by global
necessitation (in fact, the system remains inconsistent even if the
$K(\phi)\rightarrow\phi$ schema is completely removed).

It is possible to show the system becomes consistent if we localize \emph{both}
$K(\phi)\rightarrow\phi$ and $\bigwedge_{i=1}^n \neg K(p_i)$.  This is unsatisfactory, though,
because to localize the latter axiom is to suggest the students ignored the
``surprise'' part of the surprise examination announcement or didn't take it very seriously.
Fortunately, \cite{kritchman}
come to our rescue.
When McLelland and Chihara laid down their formulation of the paradox,
they did so with the understanding that an event is \emph{surprising} 
precisely if it is unknown just prior to its occurrence.
Kritchman and Raz point out that, under this definition of surprise, an 
inconsistent knower is never surprised, since an 
inconsistent knower knows everything (the same was observed by 
\cite{halpern} some time earlier).  It seems that if I know 
\emph{everything} (including contradictions), that's as bad as knowing 
nothing whatsoever.  Therefore, we should 
admit a different definition of surprise:
\begin{quote}
An event is surprising if either 1) it is unknown just prior to its 
occurrence, or 2) a contradiction is known.
\end{quote}
We know (locally) that the students in the surprise exam paradox are 
sound, so to us, the two definitions of surprise are equivalent.
But the students do not necessarily know as much.
Based on this, the axiom $\bigwedge_{i=1}^n \neg K(p_i)$
should be weakened to $\left(\bigwedge_{i=1}^n \neg K(p_i)\right)\vee 
K(\bot)$ where $\bot$ is some contradiction (such as $p_1\wedge \neg 
p_1$).
Having done so, we can keep it global, obtaining a more satisfactory
resolution to the paradox.

\begin{thm}
\label{surpriseexamthm}
Assume $n>1$.  The following system is consistent:
\begin{enumerate}
\item (Local) $K(\phi)\rightarrow\phi$.
\item (Global) $K(\phi\rightarrow\psi)\rightarrow K(\phi)\rightarrow K(\psi)$.
\item (Global) $T_n$, by which is meant $p_1\vee\cdots\vee p_n$.
\item (Global) $\left(\bigwedge_{i=1}^n \neg K(p_i)\right)\vee K(\bot)$.
\item (Global) $\bigwedge_{i=1}^n ((\neg T_i)\rightarrow K(\neg T_i))$, where $T_i$ abbreviates $p_1\vee\cdots\vee p_i$.
\item The rule of global necessitation.
\end{enumerate}
\end{thm}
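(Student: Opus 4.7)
The plan is to apply Lemma~\ref{standardizer} to reduce consistency to that of the standardized system $S'$, then exhibit a single propositional model in the style of Theorems~\ref{predicates4} and~\ref{moore2}. Let $S'_0$ collect the global axioms of $S'$, i.e., all its axioms except the local schema $K(\phi)\rightarrow\phi$. Define $\mathscr{N}$ to be the truth assignment with $p_1 = \top$ and $p_i = \bot$ for $i > 1$, and interpret $K$ by $\mathscr{N}\models K(\phi)$ iff $S'_0\models\phi$. Placing the exam on day~$1$ is the right choice because it makes $\neg T_i$ false in $\mathscr{N}$ for every $i\geq 1$, so axiom~(5) is vacuously true; and axiom~(3) is immediate.

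The crux is axiom~(4), $\left(\bigwedge_{i=1}^n \neg K(p_i)\right)\vee K(\bot)$. In $\mathscr{N}$ we cannot hope to use the second disjunct, because the local schema $K(\phi)\rightarrow\phi$ would then force $\bot$ in $\mathscr{N}$, so the task reduces to the preliminary claim that $S'_0\not\models p_i$ for each $i$. For each $i$ I would verify this by building a countermodel $\mathscr{M}_i$ of $S'_0$ with $p_i$ false: pick any day $j\neq i$ (available exactly because $n>1$), put $p_j=\top$ and every other atom false, and interpret $K$ as constantly true, the ``inconsistent knower'' trick from Theorem~\ref{moore1}. The constantly-true interpretation of $K$ trivializes the distribution schema, the $K$-closure axioms, and axiom~(5), and it validates axiom~(4) via the $K(\bot)$ disjunct, while $T_n$ holds because $p_j$ is true. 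Hence $\mathscr{M}_i\models S'_0$ yet $\mathscr{M}_i\not\models p_i$.

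With the preliminary claim in hand, the remaining verification that $\mathscr{N}\models S'$ is routine and parallels the earlier theorems: the distribution schema follows from closure of $S'_0$ under modus ponens, the $K$-axioms for valid formulas and for instances of the schemas follow because such $\phi$ are already in $S'_0$, and the local schema $K(\phi)\rightarrow\phi$ holds because $\mathscr{N}$ is, by the preceding cases, a model of $S'_0$. The main obstacle is isolating the correct preliminary claim and recognizing that the $K(\bot)$ disjunct in axiom~(4) is precisely what lets the inconsistent-knower trick succeed inside the countermodels $\mathscr{M}_i$; this is also the unique spot where the hypothesis $n>1$ is indispensable, since for $n=1$ the lone atom $p_1$ would coincide with the globally provable disjunction $T_1$ and no model $\mathscr{M}_1\models S'_0$ with $p_1$ false could exist.
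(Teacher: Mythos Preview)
Your proposal is correct and follows essentially the same approach as the paper: reduce via Lemma~\ref{standardizer} to the standardized system $S'$, build $\mathscr{N}$ with the exam on day~$1$ and $K$ interpreted as $S'_0$-provability, and establish the preliminary claim $S'_0\not\models p_i$ using the ``knows everything'' countermodel with the exam on some day $j\neq i$. Your identification of where $n>1$ enters and why the $K(\bot)$ disjunct is what makes the inconsistent-knower trick go through matches the paper's argument exactly.
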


\begin{proof}
By Lemma~\ref{standardizer}, it suffices to prove consistency of the following system $S'$:
\begin{enumerate}
\item $K(\phi\rightarrow\psi)\rightarrow K(\phi)\rightarrow K(\psi)$.
\item $T_n$, by which is meant $p_1\vee\cdots\vee p_n$.
\item $\left(\bigwedge_{i=1}^n \neg K(p_i)\right)\vee K(\bot)$.
\item $\bigwedge_{i=1}^n ((\neg T_i)\rightarrow K(\neg T_i))$, where $T_i$ abbreviates $p_1\vee\cdots\vee p_i$.
\item $K(\phi)$ whenever $\phi$ is valid.
\item $K(\phi)$ whenever $\phi$ is an instance of (1)-(6).
\item $K(\phi)\rightarrow\phi$.
\end{enumerate}
Let $S'_0$ consist of the first six lines of $S'$.
Let $\mathscr{N}$ be a model where $p_1$ is true, $p_i$ is false for $i>1$, and $K$ is interpreted so that
\[
\mbox{$\mathscr{N}\models K(\phi)$ iff $S'_0\models\phi$.}
\]
We will show $\mathscr{N}\models S'$, proving the theorem.  To do that, we need a preliminary claim.

Preliminary Claim: For any $1\leq i\leq n$, $S'_0\not\models p_i$.

Fix $1\leq i\leq n$.
Since $S'_0$ involves no non-modus ponens rules, it suffices to build a model of $S'_0$ where $p_i$ fails.
Since $n>1$, there is some $1\leq j\leq n$ such that $j\not=i$.  Let $\mathscr{M}$ be the model where $p_j$
is true, $p_k$ is false for all $k\not=j$, and knowledge is interpreted so that
\[
\mbox{$\mathscr{M}\models K(\phi)$ for every $\phi$,}
\]
i.e., $\mathscr{M}$ knows everything.
I claim $\mathscr{M}\models S'_0$.  Since $\mathscr{M}\not\models p_i$, this will prove the preliminary claim.
To see $\mathscr{M}\models S'_0$, let $\tau\in S'_0$, we'll show $\mathscr{M}\models \tau$.

Case 1: $\tau$ is $K(\phi\rightarrow\psi)\rightarrow K(\phi)\rightarrow K(\psi)$.
Well, $\mathscr{M}\models K(\psi)$ since $\mathscr{M}$ knows everything.

Case 2: $\tau$ is $T_n$.  Then $\mathscr{M}\models \tau$ since $\mathscr{M}\models p_j$.

Case 3: $\tau$ is $\left(\bigwedge_{i=1}^n \neg K(p_i)\right)\vee K(\bot)$.  Then $\mathscr{M}\models \tau$
since $\mathscr{M}\models K(\bot)$.

Case 4: $\tau$ is $\bigwedge_{i=1}^n ((\neg T_i)\rightarrow K(\neg T_i))$.
It suffices to show $\mathscr{M}\models K(\neg T_k)$ for an arbitrary $1\leq k\leq n$.
This is true because $\mathscr{M}$ knows everything.

Case 5/6: $\tau$ is $K(\phi)$ where $\phi$ is valid or $\phi$ is an instance of (1)-(6).  Then $\mathscr{M}\models K(\phi)$
since $\mathscr{M}$ knows everything.

This shows $\mathscr{M}\models S'_0$, and so since $\mathscr{M}\not\models p_i$, $S'_0\not\models p_i$.  The Preliminary Claim is proved.

Back to the main theorem, to show $\mathscr{N}\models S'$, let $\sigma\in S'$, we'll show $\mathscr{N}\models\sigma$.

Case 1: $\sigma$ is $K(\phi\rightarrow\psi)\rightarrow K(\phi)\rightarrow K(\psi)$.
Suppose $\mathscr{N}\models K(\phi\rightarrow\psi)$ and $\mathscr{N}\models K(\phi)$.
This means $S'_0\models\phi\rightarrow\psi$ and $S'_0\models\phi$.  Thus $S'_0\models\psi$ and $\mathscr{N}\models K(\psi)$.

Case 2: $\sigma$ is $T_n$.  Then $\mathscr{N}\models\sigma$ since $\mathscr{N}\models p_1$.

Case 3: $\sigma$ is $\left(\bigwedge_{i=1}^n \neg K(p_i)\right)\vee K(\bot)$.
To show $\mathscr{N}\models\sigma$ it suffices to let $1\leq i\leq n$ be arbitrary and show $\mathscr{N}\models \neg K(p_i)$.
By the Preliminary Claim, $S'_0\not\models p_i$, thus $\mathscr{N}\not\models K(p_i)$, as desired.

Case 4: $\sigma$ is $\bigwedge_{i=1}^n ((\neg T_i)\rightarrow K(\neg T_i))$.
Then $\mathscr{N}\models\sigma$ vacuously, because\footnote{This technique fails if we want to make $p_1$ false in $\mathscr{N}$,
i.e.~if we want the surprise examination to fall on a day other than Monday.  The proof can be altered, at the price of slightly more
complexity, to allow a surprise exam on any day except for day $n$.  For a surprise exam on day $1<k<n$,
one would add $\neg T_{k-1}$ as a global axiom to the system and adjust the proof accordingly, and in the proof of the Preliminary
Claim, choose $j>k$.}
$\mathscr{N}\not\models \neg T_i$ for any $i$, because $\mathscr{N}\models p_1$.

Case 5: $\sigma$ is $K(\phi)$ where $\phi$ is valid.  Since $\phi$ is valid, $S'_0\models\phi$, so $\mathscr{N}\models K(\phi)$.

Case 6: $\sigma$ is $K(\phi)$ where $\phi$ is an instance of (1)-(6).  Then $\phi\in S'_0$, so $S'_0\models\phi$, so $\mathscr{N}\models K(\phi)$.

Case 7: $\sigma$ is $K(\phi)\rightarrow\phi$.
Suppose $\mathscr{N}\models K(\phi)$.  This means $S'_0\models \phi$.
By Cases 1-6, $\mathscr{N}\models S'_0$.  Since $\mathscr{N}\models S'_0$ and $S'_0\models \phi$,
$\mathscr{N}\models\phi$ and our work is finished.
\end{proof}

\section{Example: Fitch's Paradox}

Fitch's Paradox, first published in \cite{fitch},
is the fact that under certain assumptions,
if every true fact is knowable then every true fact is known.
See \cite{salerno} for an introduction, though we part ways when
it comes to the exact system to use\footnote{I
should confess here that in \cite{alexandersynthese}
I committed a grave sin: I misread Salerno's system and attributed
a different system to him for which he was not responsible.}.  Instead we 
turn to \cite{costa} who obtained the following system by
very systematic means (the language is propositional, extended by modal 
operators $K$ and $\square$, and (if we understand correctly) $\diamond$ 
abbreviates $\neg\square\neg$):
\begin{enumerate}
\item $\square(\phi\rightarrow\psi)\rightarrow 
(\square(\phi)\rightarrow\square(\psi))$.
\item $(K(\phi\rightarrow\psi)\wedge K(\phi))\rightarrow K(\psi)$.
\item $K(\phi)\rightarrow\phi$.
\item The knowability thesis: $\phi\rightarrow\diamond(K(\phi))$.
\item $\square$-necessitation: $\phi/\square(\phi)$.
\item $K$-necessitation: $\phi/K(\phi)$.
\end{enumerate}
From these, the \emph{omniscience principle} schema, $\phi\rightarrow 
K(\phi)$, can be deduced by what is known as the Church-Fitch argument.
This is considered a paradox since it seems plausible that all truths are 
knowable ($\phi\rightarrow\diamond(K(\phi))$) but it seems absurd that we 
are omniscient.  We would like to weaken necessitation to remove this 
unwanted consequence.  It turns out that if we localize
$K(\phi)\rightarrow\phi$, it resolves the paradox so strongly, we can
even strengthen the knowability thesis.

\begin{thm}
\label{fitch1}
Let $\mathscr{L}$ be a propositional language (with at least one atom 
$q$) extended by modal operators $K$ and $\square$.  Let $S$ be the 
following system in $\mathscr{L}$:
\begin{enumerate}
\item (Global) $\square(\phi\rightarrow\psi)\rightarrow 
(\square(\phi)\rightarrow\square(\psi))$.
\item (Global) $(K(\phi\rightarrow\psi)\wedge K(\phi))\rightarrow 
K(\psi)$.
\item (Local) $K(\phi)\rightarrow\phi$.
\item (Global) The strong knowability thesis: 
$\diamond(K(\phi))$.
\item The rule of global necessitation for $\square$.
\item The rule of global necessitation for $K$.
\end{enumerate}
Then $S\not\models q\rightarrow K(q)$.
\end{thm}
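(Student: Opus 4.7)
My plan mirrors the strategy of Theorem~\ref{predicates4}: apply a two-modality extension of Lemma~\ref{standardizer} to trade the two rules of global necessitation for a closure schema, then exhibit a single countermodel.

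First, Lemma~\ref{standardizer} generalizes routinely to a system with two global-necessitation rules: the standardized system $S'$ replaces those rules by iterated $K[\cdot]$- and $\square[\cdot]$-closure axioms over the global axioms, together with the $K$- and $\square$-distribution schemas. It therefore suffices to show $S'\not\models q\rightarrow K(q)$, which I would do by producing a model of $S'$ in which $q\wedge\neg K(q)$ holds.

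For the countermodel, I would build a three-world Kripke structure $\{w_1,w_2,w^*\}$ with designated world $w_1$ and then take the induced truth assignment there (treating every formula of the shape $K(\psi)$ or $\square(\psi)$ as a propositional atom whose value is its Kripke-truth at $w_1$). Make $q$ true at $w_1$ and false at $w_2$. Let $R_K$ be reflexive at $w_1$ and at $w_2$ and additionally include the edge $(w_1,w_2)$, while $w^*$ has no $R_K$-successors (so $K(\phi)$ holds vacuously at $w^*$). Let $R_\square$ be chosen so that every world reaches $w^*$ (including $w^*R_\square w^*$). Then at $w_1$: $q$ is true; $K(q)$ fails because $q$ fails at the $R_K$-successor $w_2$; and the strong knowability thesis $\diamond K(\phi)$ holds at every world via the witness $w^*$.

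The remaining $S'$-axioms check out at $w_1$ with little work: $K$- and $\square$-distribution hold in any Kripke model; the local $T$-axiom $K(\phi)\rightarrow\phi$ holds at $w_1$ thanks to $R_K$-reflexivity there; and a short induction on the construction of the closure shows that every closure formula holds at every world, so its $K[\cdot]$ and $\square[\cdot]$ wrappings do too. The chief obstacle is reconciling three competing demands at $w_1$ simultaneously: the strong knowability thesis requires $R_\square$-access to a ``$K$-omniscient'' world; the local $T$-axiom demands $R_K$-reflexivity at $w_1$; and falsifying $K(q)$ demands an $R_K$-successor of $w_1$ where $q$ fails. Introducing $w^*$ as an $R_K$-dead-end that is $R_\square$-reachable from everywhere is precisely what accommodates all three constraints at once.
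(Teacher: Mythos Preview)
Your approach is correct but genuinely different from the paper's. Both begin with the two-modality variant of Lemma~\ref{standardizer} to pass to the rule-free system $S'$, but from there the paper does not build a Kripke model. Instead, it lets $S'_0$ consist of lines (1)--(7) of $S'$ and defines a single-world valuation $\mathscr{N}$ by declaring $\mathscr{N}\models K(\phi)$ iff $S'_0\models\phi$ and likewise for $\square$ (so $K$ and $\square$ collapse to $S'_0$-provability). The closure schemas and the local $T$-axiom then fall out almost for free; the real work is a Preliminary Claim, proved via a second auxiliary valuation $\mathscr{M}$ in which $K$ is omniscient and $\square$ is interpreted as truth, which shows that $S'_0\not\models q$ and $S'_0\not\models\neg K(\phi)$.

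Your three-world Kripke construction collapses these two layers into one concrete frame: your $w^*$ plays the role of the paper's $\mathscr{M}$ (an $R_K$-dead end witnessing $\diamond K(\phi)$ everywhere), while $w_2$ directly witnesses the failure of $K(q)$ at $w_1$, replacing the paper's detour through ``$S'_0\not\models q$''. The price you pay is the short structural induction showing every closure formula holds at \emph{every} world (not just $w_1$); the paper avoids this because $\phi\in S'_0$ trivially gives $S'_0\models\phi$ and hence $\mathscr{N}\models K(\phi)$. What you gain is a more elementary and self-contained argument in standard relational semantics; what the paper's provability-model template gains is uniformity with the proofs of Theorems~\ref{predicates4}, \ref{moore1}, \ref{moore2}, \ref{surpriseexamthm}, and \ref{thereisamachine}, and a clean separation of the ``global'' from the ``local'' part of the model.
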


\begin{proof}
By a straightforward variation of Lemma~\ref{standardizer}, it suffices to show $S'\not\models q\rightarrow K(q)$, where $S'$ is:
\begin{enumerate}
\item $\square(\phi\rightarrow\psi)\rightarrow(\square(\phi)\rightarrow\square(\psi))$.
\item $(K(\phi\rightarrow\psi)\wedge K(\phi))\rightarrow K(\psi)$.
\item The strong knowability thesis: $\diamond(K(\phi))$.
\item $K(\phi)$ whenever $\phi$ is valid.
\item $\square(\phi)$ whenever $\phi$ is valid.
\item $K(\phi)$ whenever $\phi$ is an instance of (1)-(7).
\item $\square(\phi)$ whenever $\phi$ is an instance of (1)-(7).
\item $K(\phi)\rightarrow\phi$.
\end{enumerate}
Let $S'_0$ consist of lines (1)-(7) of $S'$ and let $\mathscr{N}$ be a model in which $q$ is true and $K$ and $\square$
are interpreted so
\[
\mbox{$\mathscr{N}\models K(\phi)$ iff $S'_0\models\phi$, and $\mathscr{N}\models\square(\phi)$ iff $S'_0\models\phi$}\]
(we admit this is a very unusual semantics for $\square$, but since the theorem we are proving is entirely syntactical,
that should not matter).  We will show $\mathscr{N}\models S'$ and $\mathscr{N}\not\models q\rightarrow K(q)$.
Since $S'$ involves no non-modus ponens rules of inference, this 
will show $S'\not\models q\rightarrow K(q)$ and prove the theorem.

Preliminary Claim:  There is a model $\mathscr{M}$ such that $\mathscr{M}\models S'_0$, $\mathscr{M}\not\models q$,
and $\mathscr{M}\not\models \neg K(\phi)$ for every formula $\phi$.

Let $\mathscr{M}$ be a model in which $q$ is false, $K$ is interpreted so that
\[
\mbox{$\mathscr{M}\models K(\phi)$ for every $\phi$,}
\]
and $\square$ is interpreted recursively so that
\[
\mbox{$\mathscr{M}\models \square(\phi)$ iff $\mathscr{M}\models\phi$.}
\]
Right away $\mathscr{M}\not\models q$ and $\mathscr{M}\not\models\neg K(\phi)$, it remains to show
$\mathscr{M}\models S'_0$.  Let $\tau\in S'_0$, we will show $\mathscr{M}\models\tau$.

Case 1: $\tau$
is $\square(\phi\rightarrow\psi)\rightarrow (\square(\phi)\rightarrow 
\square(\psi))$.
The way $\mathscr{M}$ interprets $\square$, to show 
$\mathscr{M}\models\tau$
we must show $\mathscr{M}\models 
(\phi\rightarrow\psi)\rightarrow(\phi\rightarrow\psi)$, but this is a 
tautology.

Case 2: $\tau$
is $(K(\phi\rightarrow\psi)\wedge K(\phi))\rightarrow K(\psi)$.
Then $\mathscr{M}\models\tau$ because $\mathscr{M}\models K(\psi)$.

Case 3: $\tau$ is $\diamond(K(\phi))$.
Unwrapping the abbreviation, $\tau$ is
$\neg\square(\neg K(\phi))$.
By the way $\mathscr{M}$ interprets $\square$,
we must show
$\mathscr{M}\models \neg(\neg K(\phi))$,
i.e.~$\mathscr{M}\models K(\phi)$, which is true by the way
$\mathscr{M}$ interprets $K$.

Case 4/5: $\tau$ is $K(\phi)$ or $\square(\phi)$, where $\phi$ is valid.
Trivial.

Case 6: $\tau$ is $K(\phi)$ where $\phi$ is an instance of (1)-(7).
Trivial.

Case 7: $\tau$ is $\square(\phi)$ where $\phi$ is an instance of (1)-(7).
Then $\mathscr{M}\models\tau$ by Lemma~\ref{techlemma}.

This shows $\mathscr{M}\models S'_0$ and proves the Preliminary Claim.

Back to the main theorem, we have two things to show: that 
$\mathscr{N}\not\models q\rightarrow K(q)$, and that $\mathscr{N}\models 
S'$.
Since $\mathscr{N}\models q$, to show $\mathscr{N}\not\models q\rightarrow 
K(q)$ we must show $\mathscr{N}\not\models K(q)$, i.e., that 
$S'_0\not\models q$.
But this is true by the Preliminary Claim: there is a model where $S'_0$ 
holds and $q$ fails.

To show $\mathscr{N}\models S'$, let $\sigma\in S'$, we will show 
$\mathscr{N}\models S'$.

Case 1/2: $\sigma$ is $\square(\phi\rightarrow\psi)\rightarrow 
(\square(\phi)\rightarrow \square(\psi))$
or $(K(\phi\rightarrow\psi)\wedge K(\phi))\rightarrow K(\psi)$.  By now, 
this case should be 
straightforward.

Case 3: $\sigma$ is $\diamond(K(\phi))$.
That is, $\sigma$ is $\neg \square(\neg K(\phi))$.
By the Preliminary Claim, there is a structure
where $S'_0$ holds and $\neg K(\phi)$ fails.
Since there are no non-modus ponens rules of inference in $S'_0$,
this shows $S'_0\not\models\neg K(\phi)$.
Thus $\mathscr{N}\not\models \square(\neg K(\phi))$, as desired.

Case 4/5: $\sigma$ is $K(\phi)$ or $\square(\phi)$, where $\phi$ is valid.
Trivial.

Case 6/7: $\sigma$ is $K(\phi)$ or $\square(\phi)$, where $\phi$ is an 
instance of (1)-(7).  Then $\phi\in S'_0$, thus $S'_0\models \phi$, thus 
$\mathscr{N}\models K(\phi)\wedge \square(\phi)$.

Case 8: $\sigma$ is $K(\phi)\rightarrow\phi$.
Suppose $\mathscr{N}\models K(\phi)$, which means $S'_0\models\phi$.
By Cases 1-7, $\mathscr{N}\models S'_0$.  Thus $\mathscr{N}\models\phi$.
This proves the theorem.
\end{proof}

The above theorem is strictly stronger than the second theorem of \cite{alexandersynthese},
in which the knowability thesis was also localized along with $K(\phi)\rightarrow\phi$.

To prove Theorem~\ref{fitch1},
we constructed a model in which $K$ and $\square$
were interpreted identically.
Thus we could strengthen Theorem~\ref{fitch1} by adding
$K(\phi)\leftrightarrow\square(\phi)$ as a local axiom
(call it $K=\square$).
Could we add this axiom \emph{globally}?
The answer turns out to be ``no'';
one can see this by using the variation on the Church-Fitch
argument which I published in \cite{alexanderreasoner}.
It is, however, possible to globalize
$K=\square$ and avoid paradox if, in exchange, the knowability thesis is localized:

\begin{thm}
\label{fitch2}
The following system does not prove $q\rightarrow K(q)$:
\begin{enumerate}
\item (Global) $\square(\phi\rightarrow\psi)\rightarrow
(\square(\phi)\rightarrow\square(\psi))$.
\item (Global) $(K(\phi\rightarrow\psi)\wedge K(\phi))\rightarrow
K(\psi)$.
\item (Global) $K(\phi)\leftrightarrow \square(\phi)$.
\item (Local) $K(\phi)\rightarrow\phi$.
\item (Local) The strong knowability thesis:
$\diamond(K(\phi))$.
\item The rule of global necessitation for $\square$.
\item The rule of global necessitation for $K$.
\end{enumerate}
\end{thm}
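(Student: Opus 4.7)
The plan is to mimic the proof of Theorem~\ref{fitch1} with just enough adjustment to handle the swap: knowability becomes local while $K\leftrightarrow\square$ becomes global. Applying the same two-operator variant of Lemma~\ref{standardizer} that was used in Theorem~\ref{fitch1}, it suffices to show that a standardized system $S'$ does not prove $q\rightarrow K(q)$. Here $S'$ consists of the two distribution schemas, $K(\phi)\leftrightarrow\square(\phi)$, the closure axioms $K(\phi)$ and $\square(\phi)$ for $\phi$ valid or an instance of any axiom of $S'$, and finally the two local items $K(\phi)\rightarrow\phi$ and $\diamond K(\phi)$ at the end. Let $S'_0$ be all of $S'$ except those two local lines. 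Define a model $\mathscr{N}$ in which $q$ is true and both $K$ and $\square$ are interpreted identically by $\mathscr{N}\models K(\phi)$ iff $\mathscr{N}\models\square(\phi)$ iff $S'_0\models\phi$. Under this common interpretation, $K(\phi)\leftrightarrow\square(\phi)$ automatically holds in $\mathscr{N}$.

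As in Theorem~\ref{fitch1}, everything hinges on a Preliminary Claim: there is a model $\mathscr{M}$ of $S'_0$ in which $q$ is false and $K(\phi)$ holds for every $\phi$. I would build $\mathscr{M}$ by making $q$ false and interpreting both operators ``omnisciently,'' i.e., $\mathscr{M}\models K(\phi)$ and $\mathscr{M}\models\square(\phi)$ for every $\phi$. The routine case-check that $\mathscr{M}\models S'_0$ is straightforward: the distribution schemas have uniformly true consequents; $K\leftrightarrow\square$ is trivial because both sides are uniformly true; and all the closure axioms hold since everything of the form $K(\cdot)$ and $\square(\cdot)$ is true in $\mathscr{M}$ by fiat (no appeal to Lemma~\ref{techlemma} is needed, because we do not interpret the operators as truth).

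Granted the Preliminary Claim, the main verification $\mathscr{N}\models S'$ goes through exactly as before, with only the two local lines requiring comment. For $K(\phi)\rightarrow\phi$: if $\mathscr{N}\models K(\phi)$ then $S'_0\models\phi$, and the preceding cases give $\mathscr{N}\models S'_0$, hence $\mathscr{N}\models\phi$. For $\diamond K(\phi)$, which unfolds to $\neg\square\neg K(\phi)$: by the Preliminary Claim $\mathscr{M}\models K(\phi)$, so $\mathscr{M}\not\models\neg K(\phi)$, so $S'_0\not\models\neg K(\phi)$, and thus $\mathscr{N}\not\models\square\neg K(\phi)$. Finally, $\mathscr{M}\not\models q$ gives $S'_0\not\models q$, so $\mathscr{N}\not\models K(q)$, and combined with $\mathscr{N}\models q$ this yields $\mathscr{N}\not\models q\rightarrow K(q)$, completing the argument.

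The main potential obstacle is conceptual rather than computational: one must be convinced that promoting $K\leftrightarrow\square$ to a global axiom is compatible with the ``omniscient'' $\mathscr{M}$ (it is, because both sides of the biconditional are uniformly true) and that the demotion of knowability to a local axiom means we only need to verify $\diamond K(\phi)$ in $\mathscr{N}$, not in $\mathscr{M}$. No essentially new idea beyond those in Theorem~\ref{fitch1} seems to be required.
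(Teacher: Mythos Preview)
Your proposal is correct and follows essentially the same approach as the paper's own proof sketch: both modify the Preliminary Claim by making $\mathscr{M}$ interpret $K$ and $\square$ omnisciently (rather than interpreting $\square$ as truth), which validates the now-global $K\leftrightarrow\square$ while harmlessly falsifying the now-local knowability thesis in $\mathscr{M}$. Your observation that Lemma~\ref{techlemma} is no longer needed for the closure case is a nice clarification the paper leaves implicit.
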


\begin{proof}[Proof Sketch]
Similar to the proof of Theorem~\ref{fitch1}, with $S'$ and $S'_0$ modified in the obvious ways.
The crucial difference is the proof of the Preliminary Claim.
When constructing the model $\mathscr{M}$ of the Preliminary Claim, make it interpret $K$ and $\square$
so that
\[
\mbox{$\mathscr{M}\models K(\phi)$ for all $\phi$, and $\mathscr{M}\models\square(\phi)$ for all $\phi$.}
\]
Note that $\mathscr{M}$ will then fail the knowability thesis-- but that is fine, since the knowability thesis is no longer in $S'_0$--
but satisfy $K(\phi)\leftrightarrow\square(\phi)$.
\end{proof}

In summary we can say the following about the structure of Fitch's paradox ($G$ stands for global, $L$ stands for local):
\begin{itemize}
\item $\mbox{$G$-Soundness} + \mbox{$L$-Knowability} \models \mbox{Paradox}$ (\cite{fitch}).
\item $\mbox{$L$-Soundness} + \mbox{$G$-Knowability} \not\models \mbox{Paradox}$ (Theorem~\ref{fitch1}).
\item $\mbox{$G$-Knowability} + \mbox{$G$-$(K=\square)$} \models \mbox{Paradox}$ (\cite{alexanderreasoner}).
\item $\mbox{$G$-$(K=\square)$} + \mbox{$L$-Knowability} + \mbox{$L$-Soundess} \not\models \mbox{Paradox}$ (Theorem~\ref{fitch2}).
\end{itemize}
In \cite{halbach2008} we were asked, ``What if the preferred remedy for the paradox of the Knower and other paradoxes
of self-reference also helps to resolve Fitch's paradox?''  In light of Theorems~\ref{predicates4} and \ref{fitch1} it seems
we've achieved precisely that, at least if \emph{preferred} is read as ``preferred by the present author''.

\section{Example: Machines which know their own codes}

It is considered a well-known fact that if a mechanical knowing agent is 
capable of logic and arithmetic and self-reflection, then that machine 
cannot know the index of a Turing machine representing it.
See \cite{lucas}, \cite{benacerraf}, \cite{reinhardt},
\cite{penrose}, \cite{carlson}, and \cite{putnam}\footnote{Putnam does not speak so much about arbitrary machines which know their own code,
but rather argues that the totality of scientific knowledge is not a machine which knows its own code.}.
However, the proofs
involve unrestricted necessitation in various guises.
We will weaken necessitation and explicitly construct a machine which 
knows its own code.

Following \cite{carlson}, we work in the language of Peano Arithmetic 
extended by a modal operator $K$ (i.e., the language of Epistemic 
Arithmetic of \cite{shapiro}).  For \emph{de re} semantics we use Carlson's \emph{base 
logic}.  To paraphrase and summarize:
a structure for a modal language consists of a first-order structure 
$\mathscr{N}$ for its first-order part, together with a function which 
takes a purely modal formula $K(\phi)$ and a variable assignment $s$ and 
outputs True or False-- in which case we write $\mathscr{N}\models 
K(\phi)[s]$ or $\mathscr{N}\not\models K(\phi)[s]$ respectively-- 
satisfying some technical conditions which we will here brush under the 
rug.

If $\phi$ is a formula with free variables $x_1,\ldots,x_n$ (and no 
others),
a \emph{universal closure} of $\phi$ is $\forall x_1 \cdots \forall x_n 
\phi$.  To prove that a given model $\mathscr{N}$ satisfies a universal 
closure of 
$\phi$, it suffices to let $s$ be an arbitrary assignment and show $\mathscr{N}\models\phi[s]$.

It 
follows from \cite{carlson} (specifically from Proposition 3.2, 
Definitions 3.1 and 3.4, and the discussion on p.~54) that the following 
system is inconsistent (this can be glossed as ``a machine cannot know its 
own code''):
\begin{enumerate}
\item $S4$, by which is meant
\begin{enumerate}
\item Universal closures of $K(\phi\rightarrow\psi)\rightarrow 
K(\phi)\rightarrow K(\psi)$.
\item Universal closures of $K(\phi)\rightarrow\phi$.
\item Universal closures of $K(\phi)\rightarrow K(K(\phi))$.
\item The rule of necessitation.
\end{enumerate}
\item The axioms of Epistemic Arithmetic (i.e., Peano Arithmetic, with the 
induction schema extended to our modal language).
\item (Knowledge of Code)
$\exists e K(\forall x(K(\phi)\leftrightarrow x\in W_e))$, when $x$ is the 
lone free variable of $\phi$.
\begin{itemize}
\item Here $W_e$ is the $e$th r.e.~set and $x\in W_e$ abbreviates a 
formula stating that $x$ lies therein.
\end{itemize}
\end{enumerate}

We will localize (1b) and show the resulting system is 
$\omega$-consistent by explicitly constructing a machine which knows its 
own code (via Kleene's Recursion Theorem).
Our machine construction method is somewhat
similar
to the method
of \cite{carlson} and \cite{carlson2012}.
Whereas my goal is a truthful machine which knows its own code at the cost of
its own truth, Carlson's goal was a truthful machine which knew its own truth
and knew it \emph{had some} code, articulated by Reinhardt's Strong Mechanistic Thesis:
$K(\exists e\forall x(K(\phi)\leftrightarrow x\in W_e))$.
To do this Carlson used a very careful analysis of ordinal arithmetic (\cite{carlson1999}),
later organized into patterns of resemblance in \cite{carlson2001}.
The reason Carlson's goal took so much work was precisely because his machines
knew their own truthfulness globally; by localizing that, we'll have much less
difficulty reaching our own goal.

For every $n\in\mathbb{N}$, let $S_n$ be the following system:
\begin{enumerate}
\item Weakened $S4$, by which is meant
\begin{enumerate}
\item (Global) Universal closures of $K(\phi\rightarrow\psi)\rightarrow 
K(\phi)\rightarrow K(\psi)$.
\item (Local) Universal closures of $K(\phi)\rightarrow\phi$.
\item (Global) Universal closures of $K(\phi)\rightarrow K(K(\phi))$.
\item The rule of global necessitation.
\end{enumerate}
\item (Global) The axioms of Epistemic Arithmetic (i.e., Peano 
Arithmetic, with induction extended to the modal language).
\item (Global) \emph{(Having Code $n$)}
$\forall x(K(\phi)\leftrightarrow \langle 
x,\ulcorner\phi\urcorner\rangle\in W_{\overline{n}})$, provided $x$ is the 
lone free variable in $\phi$.
\begin{itemize}
\item Here $\langle x,\ulcorner\phi\urcorner\rangle\in W_{\overline{n}}$
abbreviates a formula saying $\langle x,gn(\phi)\rangle\in W_n$ where 
$gn(\phi)$ is the G\"{o}del number of $\phi$ and 
$\langle\bullet,\bullet\rangle:\mathbb{N}^2\to\mathbb{N}$ is a canonical 
computable pairing function.
\end{itemize}
\end{enumerate}

My goal is to show that there is some $n\in\mathbb{N}$
such that $S_n$ is $\omega$-consistent.
Assuming I can do that much, we can combine global necessitation with 
(3) to see
\[
S_n\models K(\forall x(K(\phi)\leftrightarrow\langle 
x,\ulcorner\phi\urcorner\rangle\in W_{\overline{n}})),
\]
which implies
$S_n\models \exists e K(\forall x(K(\phi)\leftrightarrow x\in W_e))$.

With Lemma~\ref{standardizer} in mind, for each $n\in\mathbb{N}$, let 
$S'_n$ be the system
\begin{enumerate}
\item Universal closures of $K(\phi\rightarrow\psi)\rightarrow 
K(\phi)\rightarrow K(\psi)$.
\item Universal closures of $K(\phi)\rightarrow K(K(\phi))$.
\item The axioms of Epistemic Arithmetic.
\item $\forall x(K(\phi)\leftrightarrow \langle 
x,\ulcorner\phi\urcorner\rangle\in W_{\overline{n}})$ ($x$ the lone free 
variable of $\phi$).
\item Universal closures of $K(\phi)$ whenever $\phi$ is valid (true in 
every structure according to the base logic).
\item $K(\phi)$ whenever $\phi$ is an instance of (1)-(6).
\item Universal closures of $K(\phi)\rightarrow\phi$.
\end{enumerate}

\begin{lem}
\label{standardizer2}
For each $n\in\mathbb{N}$, and each sentence $\phi$,
if $S_n\models\phi$, then $S'_n\models\phi$.
\end{lem}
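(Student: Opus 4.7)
The plan is to mimic the proof of Lemma~\ref{standardizer}, stratifying $S_n$ so that the global axioms together with global necessitation are handled first, and the local soundness schema is folded in only at the end. Let $S_{n,0}$ denote the sub-system of $S_n$ obtained by deleting the local axiom 1(b), and let $S'_{n,0}$ consist of lines (1)--(6) of $S'_n$ (that is, $S'_n$ minus line (7)).

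The first step is the preliminary claim that if $S_{n,0}\models\phi$ then $S'_{n,0}\models\phi$, proved by induction on proof length in $S_{n,0}$. Logical validities and modus-ponens steps are immediate, and the global axioms of $S_{n,0}$ appear verbatim as lines (1)--(4) of $S'_{n,0}$. The only substantive case is a global-necessitation step $\phi=K(\phi_0)$, where $S_{n,0}\models\phi_0$ in fewer steps. The induction hypothesis gives $S'_{n,0}\models\phi_0$, so compactness furnishes $s_1,\ldots,s_k\in S'_{n,0}$ with $s_1\rightarrow\cdots\rightarrow s_k\rightarrow\phi_0$ valid. Line (5) of $S'_n$ then supplies $K(s_1\rightarrow\cdots\rightarrow s_k\rightarrow\phi_0)$; repeated applications of line (1) yield $K(s_1)\rightarrow\cdots\rightarrow K(s_k)\rightarrow K(\phi_0)$; and line (6) delivers each $K(s_i)$ since each $s_i$ is an instance of (1)--(6). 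Modus ponens then closes the induction.

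With the preliminary claim in hand, the lemma itself follows by a second induction on proof length in $S_n$. The only nontrivial case is $\phi=K(\phi_0)$ obtained by global necessitation: by the very definition of that rule, the witnessing subproof of $\phi_0$ invokes no local axiom, so it certifies $S_{n,0}\models\phi_0$. The preliminary claim yields $S'_{n,0}\models\phi_0$, and the compactness argument above upgrades this to $S'_{n,0}\models K(\phi_0)$, so \emph{a fortiori} $S'_n\models\phi$.

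The only point requiring extra care beyond Lemma~\ref{standardizer} is Carlson's base logic, in which $K$-formulas may carry free variables: the schemas in $S'_n$ are stated as universal closures precisely so that, once compactness produces the sentences $s_i$, the implication instances $K(s_i\rightarrow\cdots\rightarrow\phi_0)\rightarrow K(s_i)\rightarrow K(s_{i+1}\rightarrow\cdots\rightarrow\phi_0)$ obtained by universal instantiation of line (1) are still available. Since the lemma speaks only of sentences $\phi$, this is bookkeeping rather than a genuine obstacle; all the real work lies in the compactness-plus-$K$-closure manoeuvre already present in Lemma~\ref{standardizer}.
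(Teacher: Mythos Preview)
Your proposal is correct and takes essentially the same approach as the paper: the paper's own proof is simply the one-line remark ``Similar to the proof of Lemma~\ref{standardizer} (Carlson points out that the base logic satisfies the compactness theorem),'' and you have faithfully unpacked exactly that argument, including the stratification into $S_{n,0}$/$S'_{n,0}$ and the compactness-plus-$K$-closure step. Your closing paragraph about universal closures in the base logic is a reasonable addition, since the paper leaves that bookkeeping implicit.
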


\begin{proof}
Similar to the proof of Lemma~\ref{standardizer}
(Carlson points out that the base logic satisfies the compactness
theorem).
\end{proof}

Thus, we only need show that $S'_n$ is $\omega$-consistent for some 
$n\in\mathbb{N}$.

If $\phi$ is a formula and $s$ is an assigment, let $\phi^s$ denote the 
sentence
\[
\phi(x_1|\overline{s(x_1)})(x_2|\overline{s(x_2)})\cdots
\]
obtained by replacing every free variable in $\phi$ by the numeral
for its $s$-value.

For every $n\in\mathbb{N}$, let $S'_{n0}$ be the following set of axioms:
\begin{enumerate}
\item Lines (1)-(6) of $S'_n$.
\item (Assigned Validity) $\phi^s$, for valid $\phi$ and $s$ an assignment.
\item $K(\phi)$ for each instance of (1)-(3) of $S'_{n0}$.
\end{enumerate}

\begin{lem}
\label{thereisanf}
There is a total computable function $f:\mathbb{N}\to\mathbb{N}$ such that 
for every $n\in\mathbb{N}$,
\[
W_{f(n)}
=
\{\langle m,gn(\phi)\rangle\in\mathbb{N}
\,:\,\mbox{$\phi$ is a formula with $FV(\phi)\subseteq \{x\}$
and $S'_{n0}\models \phi(x|\overline{m})$}\}
\]
where $\phi(x|\overline{m})$ is the result of substituting 
the numeral $\overline{m}$ for $x$ in $\phi$.
\end{lem}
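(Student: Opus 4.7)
The plan is to show that $S'_{n0}$ is uniformly computably enumerable in $n$, conclude that its consequence set is therefore uniformly c.e., and then invoke the $s$-$m$-$n$ theorem to extract the desired total computable $f$.

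First I would verify that each clause in the definition of $S'_{n0}$ gives rise to a uniformly c.e.\ (and mostly decidable) set of axioms. Lines (1)--(4) of $S'_n$ are governed by purely syntactic schemas that depend on $n$ only through the appearance of the numeral $\overline{n}$ in the Having Code $n$ axiom, so recognising their instances is uniformly computable. Line (5) of $S'_n$ is uniformly c.e.\ because Carlson's base logic has a sound and complete recursive proof calculus, making base-logic validity a c.e.\ relation. Line (6) of $S'_n$ is decidable by induction on formula complexity, which is exactly the measure on which its recursive clause descends. The assigned-validity axioms (item (2) of $S'_{n0}$) are then c.e.\ by the same argument about validity together with the computability of substitution, and item (3) of $S'_{n0}$ is again handled by a complexity induction. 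Hence $S'_{n0}$ is uniformly c.e.\ in $n$.

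Next I would use the fact that $S'_{n0}$ has no non--modus-ponens rule of inference: the set of sentences $\psi$ with $S'_{n0}\models\psi$ is then uniformly c.e.\ in $n$, obtained by enumerating finite sequences of formulas and checking at each step that the formula is either an $S'_{n0}$-axiom (using the previous paragraph) or follows from two earlier lines by modus ponens. Restricting the enumeration to those consequences of the specific shape $\phi(x|\overline{m})$ with $FV(\phi)\subseteq\{x\}$, and emitting the pair $\langle m,gn(\phi)\rangle$ when such a consequence is found, is a decidable post-processing step: given a sentence one can decidably decode whether it has the required shape and, if so, recover $m$ and $\phi$. The $s$-$m$-$n$ theorem then produces a total computable $f$ with $W_{f(n)}$ equal to the displayed set.

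The only real obstacle is bookkeeping around the two recursive clauses (item (6) of $S'_n$ and item (3) of $S'_{n0}$) and the appeal to c.e.-ness of base-logic validity; both are routine but worth flagging explicitly, since they are precisely what allows the induction on formula complexity to run uniformly in the parameter $n$.
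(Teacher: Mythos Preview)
Your proposal is correct and follows essentially the same approach as the paper, which dispatches the lemma in one sentence by appealing to the Church--Turing Thesis together with completeness and compactness of Carlson's base logic; you have simply unpacked the details of why the axiom set is uniformly c.e.\ and invoked the $s$-$m$-$n$ theorem where the paper leaves this implicit. One small wording issue: your ``decidable post-processing'' step is not literally a decoding, since a single sentence $\psi$ can equal $\phi(x|\overline{m})$ for infinitely many pairs $(m,\phi)$; the clean fix is to enumerate pairs $(m,\phi)$ directly, compute $\phi(x|\overline{m})$, and dovetail with the proof search---but this does not affect the correctness of your argument.
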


\begin{proof}
By the Church-Turing Thesis (Carlson points out that the base logic 
satisfies the completeness theorem; combined with compactness, this 
implies that the set of consequences of each r.e.~theory is r.e.,
and the code of the set of consequences of a theory depends uniformly
computably on the code of the theory).
\end{proof}

\begin{cor}
\label{thereisann}
There is an $n\in\mathbb{N}$ such that
\[
W_n
=
\{\langle m,gn(\phi)\rangle\in\mathbb{N}
\,:\,\mbox{$\phi$ is a formula with $FV(\phi)\subseteq \{x\}$
and $S'_{n0}\models \phi(x|\overline{m})$}\}.
\]
\end{cor}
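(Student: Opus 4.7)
The plan is a direct appeal to Kleene's Recursion Theorem (the Fixed Point Theorem for r.e.\ indices), applied to the function $f$ provided by Lemma~\ref{thereisanf}. Recall that the Recursion Theorem says: for every total computable $f:\mathbb{N}\to\mathbb{N}$, there exists $n\in\mathbb{N}$ such that $W_n = W_{f(n)}$.

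First I would observe that the set appearing in the corollary,
\[
\{\langle m,gn(\phi)\rangle : FV(\phi)\subseteq\{x\}\text{ and }S'_{n0}\models\phi(x|\overline{m})\},
\]
depends on $n$ only through the axiom system $S'_{n0}$, which in turn depends on $n$ only through the ``Having Code $n$'' axiom (where the numeral $\overline{n}$ is substituted). This dependence is manifestly computable, so Lemma~\ref{thereisanf} applies and tells us that this set is exactly $W_{f(n)}$ for the total computable function $f$ produced there. Hence the corollary asserts precisely that there is an $n$ with $W_n = W_{f(n)}$.

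Applying the Recursion Theorem to $f$ yields such an $n$ and completes the proof. There is essentially no obstacle at this stage: the self-referential content of the corollary -- namely, that $W_n$ must encode facts about what $S'_{n0}$ (which itself mentions $W_n$) proves -- has already been absorbed into the uniform computability asserted by Lemma~\ref{thereisanf}, and Kleene's fixed point theorem delivers the diagonal index in one stroke. The $n$ so obtained is exactly the code that the machine under construction will end up \emph{knowing} about itself, once we verify (in the subsequent $\omega$-consistency argument) that $S'_{n0}$ is satisfied by a suitable model.
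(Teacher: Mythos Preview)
Your proposal is correct and follows exactly the paper's approach: the paper's proof is simply ``By Kleene's Recursion Theorem and Lemma~\ref{thereisanf},'' and your argument spells out precisely that application. There is nothing to add.
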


\begin{proof}
By Kleene's Recursion Theorem and Lemma~\ref{thereisanf}.
\end{proof}

\begin{thm}
\label{thereisamachine}
Let $n$ be as in Corollary~\ref{thereisann}.
Then $S_n$ is $\omega$-consistent.
In other words, there is a knowing machine which knows
that it has code $n$.
\end{thm}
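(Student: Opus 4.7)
The plan is to mimic the pattern of Theorems~\ref{predicates4},~\ref{surpriseexamthm}, and~\ref{fitch1}. By Lemma~\ref{standardizer2} it suffices to show that $S'_n$ has a model with universe $\mathbb{N}$ interpreting PA standardly, since any such model automatically witnesses $\omega$-consistency of $S_n$. I would take $\mathscr{N}$ to have universe $\mathbb{N}$, standard interpretations of the PA symbols, and the $K$-clause
\[
\mbox{$\mathscr{N}\models K(\phi)[s]$ iff $S'_{n0}\models\phi^s$.}
\]

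Verifying $\mathscr{N}\models S'_n$ would proceed by cases over the axioms (1)--(7). The Epistemic Arithmetic axioms hold because $\mathscr{N}$ has the standard universe (modal induction is semantically valid in $\mathbb{N}$ regardless of the formula language). The distribution axiom (1) reduces to modus ponens inside $S'_{n0}$. For axioms (5) and (6), the relevant $\phi^s$ is either valid or an $S'_{n0}$-axiom, so $S'_{n0}\models\phi^s$ at once. The K4 axiom (2) rests on a preliminary claim: \emph{whenever $S'_{n0}\models\phi^s$, $S'_{n0}\models K(\phi^s)$}. This would follow, as in Lemma~\ref{standardizer}, from compactness combined with closure of $S'_{n0}$ under $K$ applied to its own axioms; the \emph{Assigned Validity} item and the recursive $K$-closure item in the definition of $S'_{n0}$ are placed there precisely so this argument goes through uniformly in $s$ rather than only for sentences.

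The genuinely new case is the Having-Code-$n$ axiom (4). Fix $\phi$ with sole free variable $x$, an assignment $s$, and $m\in\mathbb{N}$. Unwinding definitions, $\mathscr{N}\models K(\phi)[s(x|m)]$ iff $S'_{n0}\models\phi(x|\overline{m})$, whereas $\mathscr{N}\models \langle x,\ulcorner\phi\urcorner\rangle\in W_{\overline{n}}[s(x|m)]$ iff $\langle m,gn(\phi)\rangle\in W_n$. These two conditions are equivalent by the very choice of $n$ in Corollary~\ref{thereisann}; this is where Kleene's Recursion Theorem does the actual work, and it is the whole point of having chosen $n$ via a fixed point rather than an arbitrary index.

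Local soundness (7) then falls out for free: once cases (1)--(6) have been dispatched we have $\mathscr{N}\models S'_{n0}$, so any $\phi^s$ with $S'_{n0}\models\phi^s$ is automatically true in $\mathscr{N}$. The main obstacle I anticipate is the bookkeeping around the preliminary claim in case (2), since $\phi^s$ substitutions force us to run the Lemma~\ref{standardizer}-style compactness argument uniformly in assignments, not just for closed formulas; once that is in place, the Having-Code-$n$ case reduces to a direct appeal to the recursion-theoretic fixed point, and the remainder of the verification is routine.
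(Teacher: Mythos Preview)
Your proposal is correct and matches the paper's own argument: the same model $\mathscr{N}$, the same preliminary closure claim (the paper's Claim~2), and the same case-by-case verification, with Corollary~\ref{thereisann} doing the real work in the Having-Code-$n$ case. The only items you gloss over are an explicit substitution lemma (the paper's Claim~1, that $\mathscr{N}\models\phi[s]$ iff $\mathscr{N}\models\phi^s$, used to close the induction and soundness cases) and two trivial extra subcases in the soundness step, since $S'_{n0}$ contains slightly more than lines (1)--(6) of $S'_n$; neither is a genuine obstacle.
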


\begin{proof}
As discussed above, it suffices to show $S'_n$ is $\omega$-consistent.
Let $\mathscr{N}$ be a structure, in the language of Epistemic Arithmetic,
with universe $\mathbb{N}$, which interprets symbols of PA in the intended 
ways, and which interprets knowledge according to
\[
\mbox{$\mathscr{N}\models K(\phi)[s]$ iff $S'_{n0}\models\phi^s$.}
\]
We will show that $\mathscr{N}\models S'_n$.  First we need a couple 
preliminary claims.

Claim 1: For each formula $\phi$ and assignment $s$,
$\mathscr{N}\models \phi[s]$ iff $\mathscr{N}\models\phi^s$.

By induction on formula complexity.
The only interesting case is when $\phi$ is $K(\phi_0)$.
If $\mathscr{N}\models K(\phi_0)[s]$, by definition this
means $S'_{n0}\models \phi^s_0$.  Since $\phi^s_0$ is a sentence,
for every assignment $t$ we have $(\phi^s_0)^t=\phi^s_0$
and thus $S'_{n0}\models (\phi^s_0)^t$.
This shows that for every such $t$,
$\mathscr{N}\models K(\phi^s_0)[t]$.  By arbitrariness of $t$,
this shows $\mathscr{N}\models K(\phi^s_0)$.
Clearly $K(\phi^s_0)\equiv K(\phi_0)^s$.  The reverse direction-- that if 
$\mathscr{N}\models K(\phi_0)^s$ then $\mathscr{N}\models K(\phi_0)[s]$-- 
is similar.

Claim 2: For each sentence $\phi$, if $S'_{n0}\models \phi$, then
$S'_{n0}\models K(\phi)$.

By compactness, if $S'_{n0}\models\phi$, there are 
$\sigma_1,\ldots,\sigma_{\ell}\in S'_{n0}$ such that
\[
\sigma_1\rightarrow\cdots\rightarrow\sigma_{\ell}\rightarrow\phi\]
is valid.  By (5) of $S'_n$,
\[
S'_{n0} \models
K( \sigma_1\rightarrow\cdots\rightarrow\sigma_{\ell}\rightarrow\phi ).
\]
By repeated application of (1) of $S'_n$,
\[
S'_{n0}\models
K(\sigma_1) \rightarrow \cdots \rightarrow K(\sigma_{\ell}) \rightarrow 
K(\phi).
\]
Since each $\sigma_i\in S'_{n0}$, each $K(\sigma_i)\in S'_{n0}$ since 
$S'_{n0}$ is closed under $K$.  By modus ponens, $S'_{n0}\models K(\phi)$, 
as desired.

Armed with these claims, we are ready to show $\mathscr{N}\models S'_n$.
Let $\sigma\in S'_n$, we will show $\mathscr{N}\models\sigma$.

Case 1: $\sigma$ is a universal closure of 
$K(\phi\rightarrow\psi)\rightarrow K(\phi)\rightarrow K(\psi)$.
Let $s$ be an assignment and assume $\mathscr{N}\models 
K(\phi\rightarrow\psi)[s]$ and $\mathscr{N}\models K(\phi)[s]$.
This means $S'_{n0}\models (\phi\rightarrow\psi)^s$ and $S'_{n0}\models 
\phi^s$.
Clearly $(\phi\rightarrow\psi)^s\equiv \phi^s\rightarrow\psi^s$, so by 
modus ponens, $S'_{n0}\models\psi^s$, so $\mathscr{N}\models K(\psi)[s]$ 
as desired.

Case 2: $\sigma$ is a universal closure of $K(\phi)\rightarrow 
K(K(\phi))$.
To get $\mathscr{N}\models\sigma$, let $s$ be an arbitrary assignment and 
assume $\mathscr{N}\models K(\phi)[s]$.
This means $S'_{n0}\models\phi^s$.
By Claim 2, $S'_{n0}\models K(\phi^s)$, and clearly
$K(\phi^s)\equiv K(\phi)^s$, so $S'_{n0}\models K(\phi)^s$.
This shows $\mathscr{N}\models K(K(\phi))[s]$, as desired.

Case 3: $\sigma$ is an axiom of Epistemic Arithmetic.  If $\sigma$ is a 
\emph{basic axiom} (such as $\forall x(S(x)\not=0)$
or $\forall x\forall y (x\cdot S(y)=x\cdot y+x)$), then 
$\mathscr{N}\models \sigma$ simply because $\mathscr{N}$ has universe 
$\mathbb{N}$ and interprets symbols of PA in the intended ways.
But suppose $\sigma$ is a universal closure of an instance
\[
\phi(x|0)
\rightarrow (\forall x(\phi\rightarrow\phi(x|S(x))))
\rightarrow \forall x\phi
\]
of the induction schema ($\phi$ is allowed to involve $K$).
Let $s$ be an assignment and assume
$\mathscr{N}\models \phi(x|0)[s]$ and
$\mathscr{N}\models \forall x(\phi\rightarrow \phi(x|S(x)))[s]$.
We must show $\mathscr{N}\models \forall x \phi[s]$.

Since $\mathscr{N}\models \phi(x|0)[s]$, it follows by Claim 1 that 
$\mathscr{N}\models \phi(x|0)^s$.  Clearly $\phi(x|0)^s\equiv 
\phi^{s(x|0)}$, where $s(x|0)$ is the assignment which agrees with $s$ 
except that it maps $x$ to $0$.
So $\mathscr{N}\models \phi^{s(x|0)}$.

For each $m\in\mathbb{N}$, since
$\mathscr{N}\models \forall x(\phi\rightarrow\phi(x|S(x)))[s]$,
in particular
$\mathscr{N}\models \phi\rightarrow \phi(x|S(x))[s(x|m)]$.
And thus \emph{if} $\mathscr{N}\models \phi[s(x|m)]$, then
$\mathscr{N}\models \phi(x|S(x))[s(x|m)]$.
By Claim 1, that last sentence can be rephrased: \emph{if}
$\mathscr{N}\models \phi^{s(x|m)}$, then
$\mathscr{N}\models \phi(x|S(x))^{s(x|m)}$; but clearly
$\phi(x|S(x))^{s(x|m)}\equiv \phi^{s(x|m+1)}$,
so in summary so far:
\begin{itemize}
\item $\mathscr{N}\models \phi^{s(x|0)}$.
\item For each $m\in\mathbb{N}$,
if $\mathscr{N}\models \phi^{s(x|m)}$, then
$\mathscr{N}\models \phi^{s(x|m+1)}$.
\end{itemize}
Therefore, by mathematical induction, $\mathscr{N}\models\phi^{s(x|m)}$ 
for every $m\in\mathbb{N}$.
This shows (via Claim 1) that for every $m\in\mathbb{N}$,
$\mathscr{N}\models\phi[s(x|m)]$.
This means precisely that $\mathscr{N}\models\forall x\phi[s]$,
as desired.

Case 4:
$\sigma$ is $\forall x(K(\phi)\leftrightarrow\langle 
x,\ulcorner\phi\urcorner\rangle\in W_{\overline{n}})$, where $x$ is the 
lone free variable in $\phi$.
To show $\mathscr{N}\models\sigma$
it suffices to let $m\in\mathbb{N}$ be arbitrary and prove
$\mathscr{N}\models K(\phi)\leftrightarrow \langle 
x,\ulcorner\phi\urcorner\rangle\in W_{\overline{n}}[s]$ where $s$ is an
assignment with $s(x)=m$.
The following are equivalent:
\begin{align*}
\mathscr{N} &\models K(\phi)[s]\\
S'_{n0} &\models \phi^s & \mbox{(Definition of $\mathscr{N}$)}\\
S'_{n0} &\models \phi(x|\overline{m}) & \mbox{(Since $\phi$ has lone free variable $x$)}\\
\langle m,gn(\phi)\rangle &\in W_n &\mbox{(By choice of $n$ 
(Corollary~\ref{thereisann}))}\\
\mathscr{N} &\models \langle \overline{m},\ulcorner\phi\urcorner\rangle\in W_{\overline{n}}
  &\mbox{(Since $\mathscr{N}$ has standard first-order part)}\\
\mathscr{N} &\models (\langle x,\ulcorner\phi\urcorner\rangle\in W_{\overline{n}})^s &\mbox{(Since $s(x)=m$)}\\
\mathscr{N} &\models \langle x,\ulcorner\phi\urcorner\rangle\in W_{\overline{n}}[s], &\mbox{(By Claim 1)}
\end{align*}
as desired.

Case 5:
$\sigma$ is a universal closure of $K(\phi)$ where $\phi$
is valid.
Let $s$ be any assignment.  Since $\phi$ is valid,
$\phi^s$ is an instance of the Assigned Validity schema from the 
definition of $S'_{n0}$.  Thus $S'_{n0}\models \phi^s$,
hence $\mathscr{N}\models K(\phi)[s]$.

Case 6:
$\sigma$ is $K(\phi)$ where $\phi$ is an instance of (1)-(6)
of $S'_n$.
Then $\phi$ is a sentence (note: this is where we finally reap
our reward for putting all those universal closures everywhere).
So for any assignment $s$, $\phi\equiv\phi^s$,
and so, since (1)-(6) of $S'_n$ are included in $S'_{n0}$,
$S'_{n0}\models\phi^s$, whence $\mathscr{N}\models\phi[s]$.

Case 7:
$\sigma$ is a universal closure of $K(\phi)\rightarrow\phi$.
Let $s$ be an assignment and assume $\mathscr{N}\models K(\phi)[s]$ (so $S'_{n0}\models \phi^s$); we 
must show $\mathscr{N}\models\phi[s]$.

First I claim that $\mathscr{N}\models S'_{n0}$.  To see this, let 
$\tau\in S'_{n0}$, we'll show $\mathscr{N}\models\tau$.

Subcase 1: $\tau$ is an instance of (1)-(6) of $S'_n$.
Then $\mathscr{N}\models\tau$ by Cases 1--6.

Subcase 2: $\tau$ is $\psi^t$ where $\psi$ is valid and $t$ is an 
assignment.  Since $\psi$ is valid, $\mathscr{N}\models \psi[t]$.  By 
Claim 1, $\mathscr{n}\models \psi^t$.

Subcase 3: $\tau$ is $K(\psi)$ where $\psi$ is an instance of (1)-(3) of 
$S'_{n0}$.  Similar to Case 6 above.

This shows $\mathscr{N}\models S'_{n0}$.  Since $\mathscr{N}\models 
S'_{n0}$ and $S'_{n0}\models \phi^s$, $\mathscr{N}\models\phi^s$.  By 
Claim 1, $\mathscr{N}\models\phi[s]$, as desired.  The theorem is proved.
\end{proof}

What this section has established is that there is a certain dichotomy in 
machine knowledge:
\begin{quote}
A truthful machine can know its own code, or it can know its own 
truthfulness, but not both.
\end{quote}

In future work we will explore formulations of Theorem~\ref{thereisamachine}
in which $K$ is a predicate symbol rather than a modal operator.
By localizing soundness, we rule out the self-referential paradoxes which
would normally make predicate symbols unimaginable in this role.
See \cite{halbach} for a discussion of the relative merits of operators vs.~predicates.

\section{On a question of \'{E}gr\'{e} and van Benthem}

At the end of \cite{egre}, the following question appears (attributed to 
J.~van Benthem):
\begin{quote}
``...from what we saw, schematic L\"{o}b's theorem appears as the positive 
counterpart of a negative result ... Could there be other L\"{o}b-style 
strengthenings of the inconsistency results we have presented, that 
provide us with positive information about consistent subsystems of the 
conflicting schemata?''
\end{quote}

Imagine we did not know L\"{o}b's theorem.  One way we might find it is as 
follows.
In Theorem~\ref{predicates4} we
proved consistency of a weak predicate form of $S4$, by constructing a 
specific model.
To probe for things that can be consistently added to the system,
rather than consider the class of all models of the system (which is very 
hard to get our hands on), we now have a particular model.  Any schema 
which holds in the particular model must necessarily be consistent with 
the system, \emph{locally}; whether or not it is consistent
\emph{globally}
is another matter, which requires special attention.
One property we might notice
about the particular model is that it 
satisfies L\"{o}b's theorem.
By asking, ``can this 
newfound consistent local axiom be made global,'' we might discover 
L\"{o}b's schema.

More generally, any time we resolve a paradox by the methods of this 
paper, building a particular model, we can look for new consistent 
local axioms by simply examining that particular model; we can then 
attempt to prove (or disprove) they are consistent globally.  For 
example, as seen at the end of Section 4, this kind of reasoning lead us 
to a new variation of the Church-Fitch argument.

If we find a new globally consistent axiom in this way, it might be a 
candidate to answer \'{E}gr\'{e}'s and van Benthem's question.
If that axiom somehow explains why a stronger system is inconsistent, 
even better.

Here are two concrete candidates to answer the question\footnote{For another possible answer,
see Theorem 2 of \cite{friedman75}.}.
First, the schema $\left(\bigwedge_{i=1}^n \neg K(p_i)\right)\vee K(\bot)$ 
from Section 3.
This author initially resolved the surprise examination paradox
by localizing both $K(\phi)\rightarrow\phi$ and
$\bigwedge_{i=1}^n \neg K(p_i)$, but this was unsatisfactory;
but the schema $\left(\bigwedge_{i=1}^n \neg(p_i)\right)\vee K(\bot)$,
true in the particular model, could be globalized.  The resulting system was a 
``consistent subsystem of the
conflicting schemata'',
and the new schema ``provides us with positive information'' about it.

Likewise the author initially found the schema $\diamond(K(\phi))$ of Section 4
by looking at a specific model witnessing a consistent subsystem of Fitch's
hypotheses, and $\diamond(K(\phi))$ could be argued to provide positive information
about that as well.

Along these same lines, by considering the particular model constructed in 
Section 5,
we might notice it satisfies ($x$ the lone free variable in $\phi$)
\[
(\forall x (\phi\rightarrow K(\phi)))
\rightarrow \exists e \forall x (\phi\leftrightarrow x\in W_e),
\]
and it's not hard to show this schema can consistently be added 
to Theorem~\ref{thereisamachine} as a global axiom.
This schema (and knowledge thereof) is called an \emph{epistemic Church's 
Thesis} in \cite{reinhardtrevista}.

\section{Conclusion}

We have shown that
certain paradoxes vanish,
and certain impossibilities become possible, by
localizing certain axioms.
This should not come as too much of a surprise.  One might 
justify necessitation by saying that if we can prove $\phi$, then the 
knower himself can follow that proof, and therefore knows $\phi$.  But for 
the knower to follow, the proof must only use axioms the knower is aware 
of.  This became very clear in Section 2, where paradox occurred despite 
Myhill's weakening of necessitation: we know by second-order logic that 
Goodstein's Theorem is true in $\mathbb{N}$, but when we don our 
first-order Peano
arithmetist hats, that knowledge lacks conviction.
Merely adding the troublesome axiom would be harmless (as it is true), but 
its presence invalidates the justification for the rule of necessitation.

In most of our examples, we localized soundness, 
$K(\phi)\rightarrow\phi$.  The fact that this resolved all those paradoxes 
gives evidence, in our opinion, that we should take greater care before 
assuming $K(K(\phi)\rightarrow\phi)$.
For further evidence, consider the paradox of the knower of
\cite{kaplan} (as described in \cite{anderson} or on p.~21 of \cite{egre}) 
where $K(\ulcorner K(\ulcorner\phi\urcorner)\rightarrow\phi\urcorner)$ is 
one of just three schemas (plus arithmetic) which lead to paradox, the 
other two schemas being practically unassailable.
And if that is not already a devastating blow against requiring 
$K(K(\phi)\rightarrow\phi)$, \cite{thomason} proves that the following system 
proves $K(\ulcorner\phi\urcorner)$ for every $\phi$ (see p.~23 of 
\cite{egre}):
\begin{enumerate}
\item $K(\ulcorner\phi\urcorner)\rightarrow K(\ulcorner 
K(\ulcorner\phi\urcorner)\urcorner)$,
\item $K(\ulcorner K(\ulcorner\phi\urcorner)\rightarrow\phi\urcorner)$,
\item $K(\ulcorner\phi\urcorner)$ if $\phi$ is valid,
\item $K(\ulcorner\phi\rightarrow\psi\urcorner)\rightarrow 
(K(\ulcorner\phi\urcorner)\rightarrow K(\ulcorner\psi\urcorner))$.
\end{enumerate}
This shows that (at least for predicate symbols)
mere \emph{belief} in soundness is already deadly-- even without soundness 
itself!

See \cite{alexandersynthese} for several arguments\footnote{And see \cite{fraassen} for another
argument of the same, when $K$ represents belief rather than knowledge.}
for the philosophical
plausibility of knowledge which fails $K(K(\phi)\rightarrow\phi)$.
I became even further convinced of this when I read, in \cite{shapiro98}, 
about remarks G\"{o}del made at the Gibbs lecture (\cite{godel}).
The main objection (I think) to admitting the possibility of
$\neg K(K(\phi)\rightarrow\phi)$ is that truthfulness is built into the 
very definition of knowledge.
But in his lecture, G\"{o}del distinguished between objective knowledge 
and subjective knowledge, so that, for example, mathematicians might 
subjectively come to know that mathematics is sound, through some argument 
strictly outside mathematics itself-- an empirical argument, say.  Thus the mathematician might 
understand, subjectively, that $K(\phi)\rightarrow\phi$ holds for her 
knowledge, while rejecting $K(K(\phi)\rightarrow\phi)$ (say, upon 
considering the second incompleteness theorem), not because of doubting 
the \emph{truth} of $K(\phi)\rightarrow\phi$, but rather because of 
doubting the \emph{mathematical provability} of it.

Section 5 is particularly relevant: epistemological traditions should 
assume a back seat when in conflict with practical applications.
A truthful knowing machine which knows its own code could be useful in the 
real world, even without knowing its own truthfulness.  We could, perhaps, 
sidestep the issue by calling such a machine a ``believing machine'' 
rather than a ``knowing machine'', but this would be disingenuous, because 
we know the machine is truthful; to call it a believing machine would be 
to suggest it might be capable of asserting falsities.

\textbf{Further Work.}
One obvious direction to take from here is to expand the list of paradoxes treated
by global necessitation.  One particularly interesting paradox is found on pp.~260--261
of \cite{horsten}.
This paradox can be formally treated with the methods of the current paper,
but as there are no \emph{agents} involved in it, philosophical justification
will require careful treatment in a future paper.

Having obtained, through global necessitation, 
consistent variations of certain paradoxical systems, one thing to 
consider is: which of these consistent systems can be fused while 
preserving consistency? (Compare the work of \cite{friedman} on axioms 
about self-referential truth.)
For example, by merging ideas from the proofs of 
Theorems~\ref{surpriseexamthm} and \ref{fitch1},
it is possible to show that if $n>1$ then the following system is consistent
and does not prove $p_1\rightarrow K(p_1)$, in a 
language 
containing operators $K$, $\square$, and atoms $p_1,\ldots,p_n$, thus 
``simultaneously resolving'' Fitch's and the Surprise Examination 
Paradox: \begin{enumerate}
\item (Global) $K(\phi\rightarrow\psi)\rightarrow K(\phi)\rightarrow 
K(\psi)$.
\item (Global) $T_n$, by which is meant $p_1\vee\cdots\vee p_n$.
\item (Global) $\bigwedge_{i=1}^n((\neg T_i)\rightarrow K(\neg T_i))$, 
where $T_i$ abbreviates $p_1\vee\cdots\vee p_i$.
\item (Global) $\left(\bigwedge_{i=1}^n \neg K(p_i)\right)\vee K(\bot)$.
\item (Global) $\square(\phi\rightarrow\psi)\rightarrow 
(\square(\phi)\rightarrow \square(\psi))$.
\item (Local) The strong knowability thesis: $\diamond(K(\phi))$.
\item (Local) $K(\phi)\rightarrow\phi$.
\item The rules of global necessitation for $K$ and $\square$.
\end{enumerate}
Note that in the above system we've localized 
$\diamond(K(\phi))$; if we kept it global, the same 
consistency proof would not work (the Preliminary Claims
would be hard to 
reconcile).  The consistency or inconsistency of the resulting system is 
presently unknown to me (even if $\diamond(K(\phi))$ were weakened to 
$\phi\rightarrow\diamond(K(\phi))$).

A similar question can be glossed provocatively:
\begin{quote}
Is it possible for a truthful knowing machine to know its own code,
and simultaneously attend a class in which a surprise examination next 
week is announced?
\end{quote}

\clearpage

\vspace*{10pt}

\address{Department of Mathematics\\
\hspace*{9pt}The Ohio State University\\
\hspace*{18pt}231 W.~18th Ave., Columbus, Ohio, 43210, USA\\
{\it E-mail}: alexander@math.ohio-state.edu}

\clearpage

\end{document}